\title{A structure theorem for sets of small popular doubling, revisited}
\author{Przemys\l aw Mazur}
\address{Mathematical Institute, Radcliffe Observatory Quarter, Woodstock Road, Oxford OX2 6GG, United Kingdom}
\email{przemyslaw.mazur@maths.ox.ac.uk}
\newtheorem{thm}{Theorem}[section]
\newtheorem{prp}[thm]{Proposition}
\newtheorem{lm}[thm]{Lemma}
\theoremstyle{definition}
\newtheorem{cor}[thm]{Corollary}
\renewcommand{\le}{\leqslant}
\renewcommand{\ge}{\geqslant}
\renewcommand{\epsilon}{\varepsilon}
\renewcommand{\phi}{\varphi}
\begin{document}
 
 \begin{abstract}
  We prove that every set $A\subset\mathbb{Z}/p\mathbb{Z}$ with $\mathbb{E}_x\min(1_A*1_A(x),t)\le (2+\delta)t\mathbb{E}_x 1_A(a)$ is very close to an arithmetic progression. Here $p$ stands for a large prime and $\delta,t$ are small real numbers. This shows that the Vosper theorem is stable in the case of a single set.
 \end{abstract}

 \maketitle
 
 \section{Introduction}
 
 In the recent paper \cite{maz15} we proved a structure theorem for sets of integers having small popular doubling. We were aiming to extend this theorem to make it also work for sets of residue classes modulo a prime. Unfortunately we were unable to achieve this using the methods of that paper. In this paper we prove that missing statement using entirely different methods. To be more specific, our goal is to prove the following statement.
 
 \begin{thm}\label{main}
  Let $0<\alpha_1<\alpha_2<\frac{1}{4}$ and $\eta>0$. Then there exist positive real numbers $\delta_0=\delta_0(\alpha_1,\alpha_2,\eta)$, $C=C(\alpha_1,\alpha_2,\eta)$ and $p_0=p_0(\alpha_1,\alpha_2,\eta)$ with the following properties. Let $p>p_0$ be a prime and let $A\subset\mathbb{Z}/p\mathbb{Z}$ be a set. Suppose that the density $\alpha=\frac{|A|}{p}$ satisfies $\alpha_1<\alpha<\alpha_2$. Furthermore, suppose that
  \begin{equation*}
   \mathbb{E}_{x}\min(1_A*1_A(x),t)\le(2+\delta)\alpha t
  \end{equation*}
  for some numbers $\delta\in (0,\delta_0)$ and $t\in (0,t_0(\alpha_1,\alpha_2,\eta,\delta))$. Then there is an arithmetuc progression $P$ with $|P|\le (1+(1+\eta)\delta)\alpha p$ and $|A\setminus P|\le C(\delta\alpha)^{1/2}p.$
 \end{thm}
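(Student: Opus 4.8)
The plan is to analyze the hypothesis $\mathbb{E}_x\min(1_A*1_A(x),t)\le(2+\delta)\alpha t$ by thinking of it as a statement about the \emph{level sets} of the convolution $1_A*1_A$. Write $f=1_A*1_A$, so that $\sum_x f(x)=|A|^2=\alpha^2 p^2$ and $f$ is supported on $A+A$. The quantity $\mathbb{E}_x\min(f(x),t)$ can be rewritten via the layer-cake formula as $\frac{1}{p}\sum_{s=1}^{t}|\{x:f(x)\ge s\}|$, so the hypothesis says roughly that the sets $A+A$ at various ``heights'' are small on average; in particular, taking just the $s=1$ term, $|A+A|\le(2+\delta)\alpha p$ (up to lower-order terms from rounding $t$), which already puts us in the Freiman--Vosper regime: $|A+A|$ is within a factor $1+O(\delta)$ of the Cauchy--Davenport bound $2|A|-1$. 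So the first step is to invoke a robust/stable version of Vosper's theorem (e.g. via the Freiman $3k-4$-type results of Serra--Z\'emor, Candela--Serra--Spiegel, or Green--Ruzsa-style arguments, all of which apply for $\alpha<1/4$ and $p$ large) to conclude that $A$ is contained in an arithmetic progression $P_0$ with $|P_0|\le(1+c\delta)\alpha p$ for some constant.

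The trouble is that such a stability statement, applied naively, only gives $|P_0|=(1+O(\delta))\alpha p$ with an unspecified constant in the $O(\delta)$, whereas the theorem demands the sharp constant $1+(1+\eta)\delta$. To get the sharp constant I would run the following bootstrapping argument. After a dilation, assume $A\subseteq\{0,1,\dots,\ell-1\}$ with $\ell=(1+K\delta)\alpha p$ minimal, so $A$ contains $0$ and $\ell-1$, and let $B=\{0,\dots,\ell-1\}\setminus A$ be the set of ``holes,'' of size $|B|=\ell-|A|=K\delta\alpha p$. The key computation is to bound $\mathbb{E}_x\min(f(x),t)$ from \emph{below} in terms of $|B|$: near the bottom of the interval (indices close to $0$) and near the top (indices close to $\ell-1$) the convolution $f(x)=1_A*1_A(x)$ is forced to be small, of size $\approx x$ or $\approx 2\ell-2-x$, and each hole $b\in B$ depresses $f$ on a whole range of arguments. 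Carefully accounting for this, one gets an inequality of the shape $\mathbb{E}_x\min(f(x),t)\ge 2\alpha t - (\text{main positive contribution of }B) + (\text{error})$, which when combined with the hypothesis $\le(2+\delta)\alpha t$ yields $|B|\le(1+o(1))\delta\alpha p$, i.e. $K\le 1+\eta$ once $t,\delta$ are small enough and $p$ large. Setting $P=\{0,\dots,\ell-1\}$ then gives $|P|=|A|+|B|\le(1+(1+\eta)\delta)\alpha p$ and, trivially, $|A\setminus P|=0$ — so in fact the $C(\delta\alpha)^{1/2}p$ slack is needed only to absorb the imperfections of the initial Vosper-stability step before the dilation normalization is available.

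Concretely, I expect the argument to split into two regimes. \emph{Regime 1: $\delta$ not too small relative to $t$.} Here one does not get the sharp constant for free and must genuinely use the $\min(\cdot,t)$ truncation: the contributions of the level sets $\{f\ge s\}$ for $2\le s\le t$ are themselves controlled, and iterating a Pl\"unnecke/Kneser-type inequality on these nested sets (as in the structure of $A+A$ when many sums have high multiplicity) pins down $A$ up to the $(\delta\alpha)^{1/2}$ error. \emph{Regime 2: $\delta\ll t$.} Here the $s=1$ term already gives $|A+A|\le(2+\delta)\alpha p+O(t\alpha p)$, and one runs the hole-counting bootstrap above. In both cases the final normalization and the passage from ``$A$ mostly in a progression'' to ``$A$ in a progression of the stated length, up to $(\delta\alpha)^{1/2}p$ exceptions'' is routine. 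The main obstacle, and the technical heart of the paper, is the \textbf{sharp} hole-counting estimate: getting the constant in front of $\delta$ down to exactly $1+\eta$ rather than some absolute constant, which requires a precise (not merely order-of-magnitude) lower bound on $\mathbb{E}_x\min(1_A*1_A(x),t)$ in terms of how far $A$ is from a full interval, uniformly as $t\to 0$.
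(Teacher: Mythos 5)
The first step of your argument fails, and it is precisely the difficulty the paper is designed to overcome. With the paper's normalization, $1_A*1_A(x)=\mathbb{E}_y 1_A(y)1_A(x-y)$ takes values in $[0,\alpha]$ and $t$ is a small real number, so the layer-cake identity reads $\mathbb{E}_x\min(1_A*1_A(x),t)=\int_0^t \frac{1}{p}|\{x:1_A*1_A(x)\ge s\}|\,ds$; there is no ``$s=1$ term.'' More importantly, the hypothesis only controls this average over heights $s\in(0,t]$, hence essentially only the measure of the set of \emph{$t$-popular} sums $\{x:1_A*1_A(x)\ge t\}$ --- it says nothing about $|A+A|$ itself, which can perfectly well be all of $\mathbb{Z}/p\mathbb{Z}$ (perturb a progression by a sparse set whose sums are all unpopular). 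So you cannot pass to $|A+A|\le(2+\delta)\alpha p+O(t\alpha p)$ and land in the Freiman--Vosper regime; no stability version of Vosper's theorem applies directly to $A$. Likewise your bootstrap in the second paragraph presupposes $A\subseteq\{0,\dots,\ell-1\}$ with $\ell=(1+K\delta)\alpha p$, i.e.\ it assumes a form of the conclusion you are trying to prove.

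The paper's route around this obstacle is quite different: it applies a $U^2$ arithmetic regularity lemma to $1_A$, writes $1_A=f_\mathrm{str}+f_\mathrm{sml}+f_\mathrm{unf}$, and constructs a set $C$ of size at least $(\alpha-2\epsilon-\lambda)p$ on which $f_\mathrm{str}$ is bounded below and $f_\mathrm{sml}$ is locally small in an $L^2$ sense; a Bohr-set argument then shows that every element of $C+C$ is a $t$-popular sum, so the hypothesis genuinely bounds $|C+C|\le((2+\delta)\alpha+o(1))p$ and the Serra--Z\'emor theorem applies to $C$, producing the progression $P$. The sharp constant $1+(1+\eta)\delta$ and the bound $|A\setminus P|\le C(\delta\alpha)^{1/2}p$ then come not from hole-counting in an interval but from comparing Fourier coefficients of $1_P$ and $1_A$, exploiting the fact that an arithmetic progression maximizes its largest nontrivial Fourier coefficient among sets of its size. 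Your instinct that the sharp constant is the crux is reasonable, but without a mechanism for converting ``small popular sumset'' into ``small actual sumset of a large structured subset,'' the proposal does not get off the ground.
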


 To fix the notation, let us use the Haar probability measure on all groups appearing in this paper. That means that the symbol $\mathbb{E}_x$ used above is just a shorthand for $\frac{1}{p}\sum_x$, and by $f*g(x)$ we mean $\mathbb{E}_yf(y)g(x-y)$.
 
 The dependences in the statement of Theorem \ref{main} look rather complicated, let us justify them a little bit. If we were dealing with sets satisfying just $|A+A|\le (2+\delta)\alpha p$, then the correct bound for the size of $P$ would be $|P|=|A+A|-|A|+1=(1+\delta)\alpha p+1$ (see \cite{sz09} for details). The parameter $\eta$ indicates that we can make as small error as we like, even in terms of $\delta$, but to achieve that the popularity parameter $t$ has to be sufficiently small, in terms of both $\eta$ and $\delta$. Ideally we would like to conclude that $|P|\le((1+\delta)\alpha+O(t))p$ (see \cite{maz15}), but with our methods we are unable to achieve that.
 
 %Let us discuss the inequality $|P|\le (1+(1+\epsilon)\delta)\alpha p$ in a bit more detail. If the assumption was $|A+A|\le (2+\delta)\alpha p$, then there are structure theorems in the literature saying that we can require $|P|=|A+A|-|A|+1=(1+\delta)\alpha p+1$. In the corresponding statement in \cite{maz15} we only managed to get $1+2\delta$ in place of $1+\delta$. Theorem \ref{main} is then almost as good as it could possibly be.
 
 \section{Proof of the main result}
 
 Let us start with fixing all the parameters and the set $A\subset\mathbb{Z}/p\mathbb{Z}$ satisfying the assumption. We intend to apply the arithmetic regularity lemma (Theorem \ref{arl}) to the function $f=1_A$. Let $\epsilon>0$ and $\mathcal{F}$ be a growth function to be specified later. Then we can write $f=f_\mathrm{str}+f_\mathrm{sml}+f_\mathrm{unf}$, as in the statement of Theorem \ref{arl}. Let us first get rid of the function $f_\mathrm{unf}$.
 
 \begin{lm}
  Let $g,h:\mathbb{Z}/p\mathbb{Z}\to\mathbb{C}$ be functions. Then the following inequality holds: \begin{equation*}
   \|g*h\|_2\le\|g\|_{U^2}\|h\|_{U^2}
  \end{equation*}
 \end{lm}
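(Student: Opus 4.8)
The plan is to move everything to the Fourier side, where both the $U^2$-norm and the $2$-norm of a convolution have transparent descriptions. Let $\hat f(\xi)$, for $\xi\in\mathbb{Z}/p\mathbb{Z}$, denote the Fourier coefficients of $f$, normalised compatibly with the Haar probability measure convention of the paper, so that Parseval reads $\|f\|_2^2=\sum_\xi|\hat f(\xi)|^2$ and convolution dualises to pointwise multiplication, $\widehat{g*h}(\xi)=\hat g(\xi)\hat h(\xi)$. The one input I would record first is the standard identity
\[
\|f\|_{U^2}^4=\sum_\xi|\hat f(\xi)|^4,
\]
i.e.\ the $U^2$-norm is the $\ell^4$-norm of the Fourier transform. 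This follows by expanding the definition of $\|f\|_{U^2}^4$ as an average of $f$ over additive quadruples $(x,x+a,x+b,x+a+b)$, substituting the Fourier inversion formula for each of the four factors, and carrying out the averages over $x,a,b$; the phases survive only when all four frequencies coincide, leaving exactly $\sum_\xi|\hat f(\xi)|^4$.

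Granting this, the proof is a one-line application of Cauchy--Schwarz. By Parseval and the convolution identity,
\[
\|g*h\|_2^2=\sum_\xi|\hat g(\xi)|^2|\hat h(\xi)|^2,
\]
and Cauchy--Schwarz applied to this sum over the dual group gives
\[
\sum_\xi|\hat g(\xi)|^2|\hat h(\xi)|^2\le\Bigl(\sum_\xi|\hat g(\xi)|^4\Bigr)^{1/2}\Bigl(\sum_\xi|\hat h(\xi)|^4\Bigr)^{1/2}=\|g\|_{U^2}^2\|h\|_{U^2}^2.
\]
Taking square roots yields the claimed inequality.

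There is really no substantive obstacle here; the lemma is a soft Fourier-analytic fact and the ``hard part'' is merely bookkeeping --- keeping the normalisations of $\hat f$, of Parseval, and of the convolution $f*g(x)=\mathbb{E}_yf(y)g(x-y)$ mutually consistent with the paper's convention that every group carries its Haar probability measure (so averages over $\mathbb{Z}/p\mathbb{Z}$ carry a factor $1/p$ while sums over the dual group do not). One could alternatively avoid the Fourier transform entirely and argue combinatorially, writing $\|g*h\|_2^2$ as a weighted count of additive quadruples and bounding it directly by Cauchy--Schwarz against the quadruple counts defining $\|g\|_{U^2}^4$ and $\|h\|_{U^2}^4$, but the Fourier computation above is the cleanest route and is the one I would present.
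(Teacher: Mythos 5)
Your proof is correct and follows exactly the same route as the paper: Parseval plus the convolution identity to write $\|g*h\|_2^2=\sum_\xi|\hat g(\xi)|^2|\hat h(\xi)|^2$, the identity $\|f\|_{U^2}^4=\sum_\xi|\hat f(\xi)|^4$, and Cauchy--Schwarz. The only difference is that you also sketch the derivation of the $U^2$--$\ell^4$ identity, which the paper simply cites as known.
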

 
 \begin{proof}
  Using Parseval's identity and the relation between the convolution and Fourier transform we get
  \begin{equation*}
   \|g*h\|_2^2=\sum_r |\widehat{g*h}(r)|^2=\sum_r|\widehat{g}(r)|^2|\widehat{h}(r)|^2.
  \end{equation*}
  On the other hand, we know that
  \begin{equation*}
   \|g\|_{U^2}^4=\sum_r|\widehat{g}(r)|^4\qquad\text{and}\qquad\|h\|_{U^2}^4=\sum_r|\widehat{h}(r)|^4.
  \end{equation*}
  The inequality is then equivalent to the Cauchy-Schwarz inequality in the following form:
  \begin{equation*}
   \left(\sum_r|\widehat{g}(r)|^2|\widehat{h}(r)|^2\right)^2\le\left(\sum_r|\widehat{g}(r)|^4\right)\left(\sum_r|\widehat{h}(r)|^4\right).
  \end{equation*}

 \end{proof}

 \begin{cor}
  Let $1_A=f_\mathrm{str}+f_\mathrm{sml}+f_\mathrm{unf}$ as above. Then the following inequality holds:
  \begin{equation*}
   \mathbb{E}_{x}\min((f_\mathrm{str}+f_\mathrm{sml})*(f_\mathrm{str}+f_\mathrm{sml}),t)\le (2+\delta)\alpha t+\frac{2}{(\mathcal{F}(M))^{1/2}}.
  \end{equation*}

 \end{cor}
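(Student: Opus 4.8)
The plan is to transfer the popular-doubling hypothesis from $1_A$ to the bounded structured-plus-small part $g:=f_\mathrm{str}+f_\mathrm{sml}$, paying an error controlled by the $U^2$-norm of $f_\mathrm{unf}$. Since $1_A=g+f_\mathrm{unf}$, bilinearity of convolution gives
\begin{equation*}
 g*g=1_A*1_A-2\,g*f_\mathrm{unf}-f_\mathrm{unf}*f_\mathrm{unf},
\end{equation*}
and, because $s\mapsto\min(s,t)$ is $1$-Lipschitz on $\mathbb{R}$, this yields the pointwise bound
\begin{equation*}
 \min\bigl(g*g(x),t\bigr)\le\min\bigl(1_A*1_A(x),t\bigr)+2\bigl|g*f_\mathrm{unf}(x)\bigr|+\bigl|f_\mathrm{unf}*f_\mathrm{unf}(x)\bigr|.
\end{equation*}
Averaging over $x$ and inserting the hypothesis $\mathbb{E}_x\min(1_A*1_A(x),t)\le(2+\delta)\alpha t$ reduces the Corollary to the estimate $2\,\mathbb{E}_x|g*f_\mathrm{unf}(x)|+\mathbb{E}_x|f_\mathrm{unf}*f_\mathrm{unf}(x)|\le 2(\mathcal{F}(M))^{-1/2}$.

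For this I would use that $\mathbb{E}_x|h(x)|=\|h\|_1\le\|h\|_2$ for any $h\colon\mathbb{Z}/p\mathbb{Z}\to\mathbb{C}$ (Cauchy--Schwarz on the probability space), and then apply the preceding Lemma to each of the two convolutions, obtaining
\begin{equation*}
 \mathbb{E}_x\bigl|g*f_\mathrm{unf}(x)\bigr|\le\|g\|_{U^2}\|f_\mathrm{unf}\|_{U^2}\qquad\text{and}\qquad\mathbb{E}_x\bigl|f_\mathrm{unf}*f_\mathrm{unf}(x)\bigr|\le\|f_\mathrm{unf}\|_{U^2}^{2}.
\end{equation*}
Now $g=f_\mathrm{str}+f_\mathrm{sml}$ is the conditional expectation of $1_A$ onto the refined factor produced by Theorem~\ref{arl}, so $0\le g\le 1$ and $\mathbb{E}_x g(x)=\alpha$; hence $\|g\|_{U^2}\le\|g\|_2\le\alpha^{1/2}<\tfrac12$, using $\alpha<\alpha_2<\tfrac14$. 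Combined with the bound $\|f_\mathrm{unf}\|_{U^2}\le(\mathcal{F}(M))^{-1/2}$ supplied by Theorem~\ref{arl}, the error term is at most $\|f_\mathrm{unf}\|_{U^2}+\|f_\mathrm{unf}\|_{U^2}^2\le 2(\mathcal{F}(M))^{-1/2}$, which is exactly the claimed loss.

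There is essentially no obstacle here beyond bookkeeping: the two substantive ingredients are the Lipschitz truncation identity above and the previous Lemma. The only point worth a moment's care is that $g$ must be controlled in $L^\infty$ (equivalently, one uses that the arithmetic regularity decomposition keeps $f_\mathrm{str}+f_\mathrm{sml}$ a $[0,1]$-valued function, not merely bounded in $L^2$), so that $\|g\|_{U^2}=O(1)$. The precise power of $\mathcal{F}(M)$ appearing in the conclusion is whatever the statement of Theorem~\ref{arl} provides for $\|f_\mathrm{unf}\|_{U^2}$; $2(\mathcal{F}(M))^{-1/2}$ is a convenient safe majorant.
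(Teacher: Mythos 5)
Your argument is correct and is essentially the paper's own proof: the same key Lemma, the same $1$-Lipschitz truncation of $\min(\cdot,t)$, and the same reduction $\|h\|_1\le\|h\|_2\le\|\cdot\|_{U^2}\|\cdot\|_{U^2}$. The only cosmetic differences are that the paper telescopes $1_A*1_A-g*g=1_A*f_\mathrm{unf}+f_\mathrm{unf}*g$ (two terms, each bounded by $\mathcal{F}(M)^{-1/2}$ using $\|1_A\|_{U^2},\|g\|_{U^2}\le\|\cdot\|_\infty\le1$, so no factor of $2$ needs absorbing), whereas you expand symmetrically and recover the constant via $\|g\|_{U^2}\le\alpha^{1/2}<\tfrac12$; note also that Theorem~\ref{arl} supplies only $|\widehat{f_\mathrm{unf}}(\chi)|\le\mathcal{F}(M)^{-1}$, so the bound $\|f_\mathrm{unf}\|_{U^2}\le\mathcal{F}(M)^{-1/2}$ you invoke requires the short computation $\|f_\mathrm{unf}\|_{U^2}^4\le\max_\chi|\widehat{f_\mathrm{unf}}(\chi)|^2\cdot\|f_\mathrm{unf}\|_2^2\le\mathcal{F}(M)^{-2}$, which the paper spells out.
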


 \begin{proof}
  First of all, since for all characters $\chi$ we have $|\widehat{f_\mathrm{unf}}(\chi)|=|\langle f_\mathrm{unf},\chi\rangle|\le\frac{1}{\mathcal{F}(M)}$, we can estimate the $U^2$ norm of $f_\mathrm{unf}$ as
  \begin{multline*}
   \|f_\mathrm{unf}\|_{U^2}^4=\sum_{\chi}|\widehat{f_\mathrm{unf}}(\chi)|^4\le\frac{1}{(\mathcal{F}(M))^2}\sum_{\chi}|\widehat{f_\mathrm{unf}}(\chi)|^2=\\
   =\frac{\|f_\mathrm{unf}\|_2^2}{(\mathcal{F}(M))^2}\le\frac{\|f_\mathrm{unf}\|_\infty^2}{(\mathcal{F}(M))^2}=\frac{1}{(\mathcal{F}(M))^2}.
  \end{multline*}
  Therefore for any function $g:\mathbb{Z}/p\mathbb{Z}\to\mathbb{C}$ with $\|g\|_\infty\le 1$ the lemma above gives
  \begin{equation*}
   \|f_\mathrm{unf}*g\|_1\le\|f_\mathrm{unf}*g\|_2\le\|f_\mathrm{unf}\|_{U^2}\|g\|_{U^2}\le\|f_\mathrm{unf}\|_{U^2}\|g\|_\infty\le\frac{1}{(\mathcal{F}(M))^{1/2}}.
  \end{equation*}
  Applying this to the functions $g=1_A$ and $g=f_\mathrm{str}+f_\mathrm{sml}$ and using triangle inequality we get
  \begin{equation*}
   \|1_A*1_A-(f_\mathrm{str}+f_\mathrm{sml})*(f_\mathrm{str}+f_\mathrm{sml})\|_1\le\frac{2}{(\mathcal{F}(M))^{1/2}}.
  \end{equation*}
  Now we use an easy-to-check inequality $|\min(a,t)-\min(b,t)|\le|a-b|$ for $a=1_A*1_A(x)$ and $b=(f_\mathrm{str}+f_\mathrm{sml})*(f_\mathrm{str}+f_\mathrm{sml})(x)$ for any $x\in\mathbb{Z}/p\mathbb{Z}$. Combining them with another instance of triangle inequality yields
  \begin{equation*}
   |\mathbb{E}_{x}\min(1_A*1_A(x),t)-\mathbb{E}_{x}\min((f_\mathrm{str}+f_\mathrm{sml})*(f_\mathrm{str}+f_\mathrm{sml}),t)|\le\frac{2}{(\mathcal{F}(M))^{1/2}},
  \end{equation*}
  which gives the result.
 \end{proof}

 We managed to remove $f_\mathrm{unf}$ from our considerations, now it is time for $f_\mathrm{sml}$. To deal with this. let $\lambda$ be a small quantity to be specified and let $B=\{x\in\mathbb{Z}/p\mathbb{Z}:\|\phi(x)\|\le\frac{\lambda}{2M}\}$ be a Bohr set. Recall that $\phi$ is the homomorphism used to construct $f_\mathrm{str}$, for the definition of $\|\phi(x)\|$, see the appendix. Now let
 \begin{equation*}
  C=\{x\in\mathbb{Z}/p\mathbb{Z}:f_\mathrm{str}(x)\ge\lambda\quad\text{and}\quad\mathbb{E}_{y\in B}|f_\mathrm{sml}(x+y)|^2\le\epsilon\}.
 \end{equation*}
 Intuitively, we take all elements where $f_\mathrm{srt}$ is somewhat large and where $f_\mathrm{sml}$ is too small to destroy that. First of all, let us estimate the size of $C$. The set $C'=\{x\in\mathbb{Z}/p\mathbb{Z}:f_\mathrm{str}(x)\ge\lambda\}$ has size at least $\sum_x f_\mathrm{str}(x)-\lambda p$, since
 \begin{equation*}
  \sum_x f_\mathrm{str}(x)=\sum_{x\in C'} f_\mathrm{str}(x)+\sum_{x\not\in C'} f_\mathrm{str}(x)\le |C'|+\lambda p.
 \end{equation*}
 On the other hand, the set $C''=\{x\in\mathbb{Z}/p\mathbb{Z}:\mathbb{E}_{y\in B}|f_\mathrm{sml}(x+y)|^2>\epsilon\}$ has size at most $\epsilon p$, because
 \begin{equation*}
  \epsilon^2\ge\mathbb{E}_x|f_\mathrm{sml}(x)|^2=\mathbb{E}_x(\mathbb{E}_{y\in B}|f_\mathrm{sml}(x+y)|^2)\ge\frac{\epsilon|C''|}{p}.
 \end{equation*}
 Therefore the size of $C$ can be estimated as $|C|=|C'\setminus C''|\ge |C'|-|C''|\ge\sum_x f_\mathrm{str}(x)-(\lambda+\epsilon)p$. To make it more explicit, note that from the construction of $f_\mathrm{unf}$ we see that $\mathbb{E}_x f_\mathrm{unf}(x)=0$. That leads to $\mathbb{E}_x (f_\mathrm{str}+f_\mathrm{sml})(x)=\mathbb{E}_x 1_A(x)=\alpha$; combining it with $|\mathbb{E}_x f_\mathrm{sml}(x)|\le\|f_\mathrm{sml}\|_1\le\|f_\mathrm{sml}\|_2\le\epsilon$ we get $\mathbb{E}_x f_\mathrm{str}(x)\ge\alpha-\epsilon$. In the end it means that $|C|\ge(\alpha-2\epsilon-\lambda)p$.
 
 Now it is time to see the reason why we defined the set $C$ in this way. To see this, let $x_1,x_2\in C$ and consider four functions: $f_1,f_2,g_1,g_2:B\to\mathbb{C}$ defined as:
 \begin{equation*}
  f_i(x)=f_\mathrm{str}(x_i+(-1)^ix),\qquad g_i(x)=f_\mathrm{sml}(x_i+(-1)^ix).
 \end{equation*}
 Since $f_\mathrm{str}+f_\mathrm{sml}$ is a nonnegative function, we have the inequality
 \begin{multline*}
  (f_\mathrm{str}+f_\mathrm{sml})*(f_\mathrm{str}+f_\mathrm{sml})(x_1+x_2)\ge\\
  \ge\mathbb{E}_x(f_\mathrm{str}+f_\mathrm{sml})(x_1-x)(f_\mathrm{str}+f_\mathrm{sml})(x_2+x)1_B(x)=\\
  =\frac{|B|}{p}\langle f_1+g_1,f_2+g_2\rangle.
 \end{multline*}
 Now from the Lipschitz nature of $F$ and the definitions of $B$ and $C$ we know that $f_1(x),f_2(x)\ge\frac{\lambda}{2}$ for all $x\in B$, which leads to $\langle f_1,f_2\rangle\ge\frac{\lambda^2}{4}$. Moreover since $\|f_i\|_2\le\|f_i\|_\infty\le 1$ and $\|g_i\|_2\le\sqrt{\epsilon}$ (by the definition of $C$), we also have $|\langle f_1,g_2\rangle|,|\langle f_2,g_1\rangle|\le\sqrt{\epsilon}$ and $|\langle g_1,g_2\rangle|\le\epsilon$. Combining all the inequalities together we get $\langle f_1+g_1,f_2+g_2\rangle\ge\frac{\lambda^2}{4}-2\sqrt{\epsilon}-\epsilon$. Also, from the properties of Bohr sets (see for example \cite{tv06}) we know that $\frac{|B|}{p}\ge(\frac{\lambda}{2M})^{\dim B}\ge(\frac{\lambda}{2M})^M$. Therefore if only $t\le(\frac{\lambda}{2M})^M(\frac{\lambda^2}{4}-2\sqrt{\epsilon}-\epsilon)$, we have just proved that $(f_\mathrm{str}+f_\mathrm{sml})*(f_\mathrm{str}+f_\mathrm{sml})(x)\ge t$ for all $x\in C+C$. Since
 \begin{equation*}
  \mathbb{E}_{x}\min((f_\mathrm{str}+f_\mathrm{sml})*(f_\mathrm{str}+f_\mathrm{sml}),t)\le (2+\delta)\alpha t+\frac{2}{(\mathcal{F}(M))^{1/2}},
 \end{equation*}
 we know that in this case we have $|C+C|\le((2+\delta)\alpha+\frac{2}{t(\mathcal{F}(M))^{1/2}})p$.
 
 The main term of the above expression is $2\alpha p$, while the main term of the expression bounding the size of $C$ is $\alpha p$. Therefore if the error terms are sufficiently small, we can make use of Serra-Z\'emor Theorem (proven in \cite{sz09}) and conclude that the set $C$ is contained in an arithmetic progression $P\subset\mathbb{Z}/p\mathbb{Z}$ of size $|P|=|C+C|-|C|+1$. We can assume without loss of generality that $|P|\ge |A|$ as we can extend $P$ if necessary. We will come back later to the conditions that must be satisfied, let us now proceed with the proof.
 
 We will examine how the progression $P$ is related to the set $A$. First of all, since $C\subset P$, we know that there can only be $\epsilon p$ elemets $x$ outside $P$ for which $f_\mathrm{str}\ge\lambda$. Therefore we have the inequality
 \begin{equation*}
  \mathbb{E}_x\max((f_\mathrm{str}-1_P)(x),0)\le\epsilon+\lambda.
 \end{equation*}
 This means that we also have $\mathbb{E}_x\max((1_P-f_\mathrm{str})(x),0)\le\mathbb{E}_x(1_P-f_\mathrm{str})(x)+\epsilon+\lambda$. Adding those two inequalities we get $\|1_P-f_\mathrm{str}\|_1\le\mathbb{E}_x(1_P-f_\mathrm{str})(x)+2(\epsilon+\lambda)$. The last quantity is then an upper bound for the absolute value of the difference of corresponding Fourier coefficients of $1_P$ and $f_\mathrm{str}$. In other words, $|\langle 1_P-f_\mathrm{str},\chi\rangle|\le\mathbb{E}_x(1_P-f_\mathrm{str})(x)+2(\epsilon+\lambda)$ for each character $\chi$. On the other hand, we know that
 \begin{equation*}
  |\langle 1_A-f_\mathrm{str},\chi\rangle|\le|\langle f_\mathrm{sml},\chi\rangle|+|\langle f_\mathrm{unf},\chi\rangle|\le\epsilon+\frac{1}{\mathcal{F}(M)}
 \end{equation*}
 for each character $\chi$. By triangle inequality it means that for every character $\chi$ the following holds:
 \begin{multline*}
  |\langle 1_P-1_A,\chi\rangle|\le\mathbb{E}_x(1_P-f_\mathrm{str})(x)+3\epsilon+2\lambda+\frac{1}{\mathcal{F}(M)}\le\\
  \le\mathbb{E}_x(1_P-1_A)(x)+4\epsilon+2\lambda+\frac{1}{\mathcal{F}(M)}.
 \end{multline*}
 Recall now that $P$ is an arithmetic progression, so one of its (non-trivial) Fourier coefficients is as large as it could possibly be, more precisely there exists $\chi_1$ with \begin{equation*} 
  |\widehat{1_P}(\chi_1)|=|\langle 1_P,\chi_1\rangle|=\frac{\sin\left(\frac{(|P|-1)\pi}{p}\right)}{p\sin\left(\frac{\pi}{p}\right)}.
 \end{equation*}
 Now let $z$ be a unit complex number satisfying $z\widehat{1_P}(\chi_1)=|\widehat{1_P}(\chi)|$. Since $1_P-1_A=2\cdot 1_P-1_{A\cap P}-1_{A\cup P}$, we have the following lower bound:
 \begin{multline*}
  |\langle 1_P-1_A,\chi\rangle|\ge\Re(z\cdot\langle 2\cdot 1_P-1_{A\cap P}-1_{A\cup P},\chi\rangle)\ge\\
  \ge\frac{2\sin\left(\frac{(|P|-1)\pi}{p}\right)-\sin\left(\frac{(|A\cap P|-1)\pi}{p}\right)-\sin\left(\frac{(|A\cup P|-1)\pi}{p}\right)}{p\sin\left(\frac{\pi}{p}\right)}.
 \end{multline*}
 We can rearrange the numerator of the last expression as follows
 \begin{multline*}
  2\sin\left(\frac{(|P|-1)\pi}{p}\right)-\sin\left(\frac{(|A\cap P|-1)\pi}{p}\right)-\sin\left(\frac{(|A\cup P|-1)\pi}{p}\right)=\\
  =4\sin\left(\frac{|P\setminus A|\cdot\pi}{2p}\right)\sin\left(\frac{|A\setminus P|\cdot\pi}{2p}\right)\sin\left(\frac{(|A|+|P|-2)\pi}{2p}\right)+\\
  +2\sin\left(\frac{(|P|-|A|)\pi}{2p}\right)\cos\left(\frac{(|A|+|P|-2)\pi}{2p}\right).
 \end{multline*}
 Now we are almost ready to estimate the size $|A\setminus P|$. First of all, since $|P|\ge |A|$, the last summand is positive and can be discarded, leaving us with the inequality
 \begin{equation*}
  \mathbb{E}_x(1_P-1_A)(x)+4\epsilon+2\lambda+\frac{1}{\mathcal{F}(M)}\ge\frac{4\sin\left(\frac{|P\setminus A|\cdot\pi}{2p}\right)\sin\left(\frac{|A\setminus P|\cdot\pi}{2p}\right)\sin\left(\frac{(|A|+|P|-2)\pi}{2p}\right)}{p\sin\left(\frac{\pi}{p}\right)}.
 \end{equation*}
 
 If only $4\epsilon+2\lambda+\frac{1}{\mathcal{F}(M)}\le(1-\eta)\delta\alpha$ and $|P|\le(1+(1+\eta)\delta)\alpha p$, we have that the left hand side is bounded by $2\delta\alpha$.
 On the other hand, if we had $|A\setminus P|>C(\delta\alpha)^{1/2}p$, then the same would hold for $|P\setminus A|$. Knowing the behaviour of sine around $0$, we would argue that the first two factors in the numerator are at least $C'(\delta\alpha)^{1/2}$ for some other constant $C'$. But the last factor is bounded away from $0$ (as $|A|$ and $|P|$ are bounded away from both $0$ and $\frac{p}{2}$) and the denominator is around $\pi$ (w.l.o.g. $>3$), so this would contradict our inequality. In the end we need to have $|A\setminus P|\le C(\alpha\delta)^{1/2}p$.
 
 The Theorem is now proven up to checking that we can choose all the constants to make the calculations work. First of all, we would like to use the Serra-Zemor Theorem for the set $C$. We had $|C+C|\le((2+\delta)\alpha+\frac{2}{t(\mathcal{F}(M))^{1/2}})p$ and $|C|\ge(\alpha-\lambda-\epsilon)p$. To make sure that $|C+C|<(2+10^{-4})|C|$ we want to require $\lambda,\epsilon<10^{-6}\alpha$, $\delta<10^{-6}$ and $t(\mathcal{F}(M))^{1/2}>10^6\alpha^{-1}$ (say). Then, we would like to have $|C+C|-|C|+1=|P|\le(1+(1+\eta)\delta)\alpha p$. This rearranges to
 \begin{equation*}
  \frac{2}{t(\mathcal{F}(M))^{1/2}}+\epsilon+\lambda<\eta\delta\alpha.
 \end{equation*}
 For this it would be enough if $\lambda,\epsilon, \frac{1}{t(\mathcal{F}(M))^{1/2}}\le\frac{\eta\delta\alpha}{4}$. Moreover, we need $t\le(\frac{\lambda}{2M})^M(\frac{\lambda^2}{4}-2\sqrt{\epsilon}-\epsilon)$. This suggests setting $\epsilon=\frac{\lambda^4}{256}$ and requiring $t\le(\frac{\lambda}{2M})^M\cdot\frac{\lambda^2}{16}$. We have just listed all the requirements and now the strategy is as follows. Set $\lambda=\frac{\eta\delta\alpha}{4}$ (we can freely assume $\eta<10^{-6}$ to make sure that $\lambda<10^{-6}\alpha$) and $\epsilon=\frac{\lambda^4}{256}$. Now the only thing is to make sure that $\frac{4}{\eta\delta\alpha(\mathcal{F}(M))^{1/2}}\le t\le\frac{\lambda^2}{16}(\frac{\lambda}{2M})^M$. This might seem impossible, as the upper bound on $M$ depends of $\mathcal{F}$ and we might not be able to fit into the correct range. The solution to this problem is the following: suppose that the above inequalities hold for some other number $t'$. Then the entire argument is correct assuming that the 
initial inequality describing popular doubling of $A$ holds with parameter $t'$ instead of $t$. A similar argument to \cite[Corollary 3.5]{maz15} shows that this is indeed the case for any $t'\ge t$. This suggest the following strategy:
 \begin{itemize}
  \item given $\alpha_1,\alpha_2,\eta$, choose $\delta_0>0$ so that $(1+(1+\eta)\delta_0)\alpha_2<\frac{1}{2}$ (to make sure all the hyptheses of Serra-Zemor Theorem are satisfied),
  \item given $\delta\in(0,\delta_0)$, set $\lambda=\frac{\eta\delta\alpha_1}{4}$ and $\epsilon=\frac{\lambda^4}{256}$,
  \item define $\mathcal{F}(M)=\frac{2^{12}}{(\eta\delta\alpha_1\lambda^2)^2}(\frac{2M}{\lambda})^{2M}$ and apply the arithmetic regularity lemma to get an upper bound $M\le M_0$,
  \item set $t_0=\frac{\lambda^2}{16}(\frac{\lambda}{2M_0})^{M_0}$.
 \end{itemize}
 Then we can find $t'\ge t_0$ with the postulated properties. Since $t\le t_0$, we also have $t\le t'$, as required. That ends the proof of Theorem \ref{main}.

 \appendix
 
 \section{Arithmetic regularity lemma}
 
 In the appendix we give a self-contained proof of the arithmetic regularity lemma for $U^2$ norm. The lemma was proven in full generality (i.e. for $U^k$ norm for arbitrary $k$) by Green and Tao in \cite{gt10}. There is also an exposition by Eberhard of the $U^2$ case. Unfortunately both of them have a feature that is a disadvantage for us, namely they deal with functions defined on $\{1,\ldots,N\}$ rather than $\mathbb{Z}/p\mathbb{Z}$. As a result the ``structured part'' obtained there comes from a Lipschitz function defined on $[0,1]\times\mathbb{Z}/q\mathbb{Z}\times\mathbb{T}^d$ (in the $U^2$ case). However, as we work over a cyclic group of prime order, in our setting everything is periodic $\pmod{p}$ and there is no room either for non-periodic behaviour (such as $[0,1]$), or periodic behaviour modulo other numbers. Therefore we are aiming for a slightly different statement of the regularity lemma, but the methods of proof remain the same.
 
 Let us by an \emph{pre-character} on a group $G$ mean a homomorphism $\phi:G\to\mathbb{T}$ and by a \emph{character} a homomorphism $\chi:G\to\{z\in\mathbb{C}\ :\ |z|=1\}$. Of course there is one-to-one correspondence between those, given by the equation $\chi=e^{2\pi i\phi}$. This terminology is by no means standard and is used only for the purpose of this paper.
 
 Before we start, let us fix some notation. For any set $\Gamma$ of pre-characters on $\mathbb{Z}/p\mathbb{Z}$ and any positive integer $n$, we define a partition $\mathcal{B}=\mathcal{B}(\Gamma, n)$ of $\mathbb{Z}/p\mathbb{Z}$ into cells. Intuitively, $\mathcal{B}$ corresponds to the partition of the torus $\mathbb{T}^\Gamma$ into $n^{|\Gamma|}$ cubes of side length $\frac{1}{n}$. More formally, two points $x,y\in\mathbb{Z}/p\mathbb{Z}$ belong to the same cell if $\phi(x),\phi(y)\in [\frac{k_\phi}{n},\frac{k_\phi+1}{n})\subset\mathbb{T}$ for some $k_\phi\in\mathbb{Z}$ for all $\phi\in\Gamma$. Note that if $\Gamma\subset\Gamma'$ and $n|n'$, then $\mathcal{B}(\Gamma', n')$ is a refinement of $\mathcal{B}(\Gamma,n)$, i.e. each cell of the former is a union of cells of the latter.
 
 For any function $f:\mathbb{Z}/p\mathbb{Z}\to\mathbb{C}$ and any partition $\mathcal{B}=\mathcal{B}(\Gamma,n)$ define the conditional expectation $\mathbb{E}(f|\mathcal{B})$ in the standard way, i.e. $\mathbb{E}(f|\mathcal{B})(x)$ is the average of $f$ on the cell of $\mathcal{B}$ containing $x$. In other words, $\mathbb{E}(f|\mathcal{B})$ is just the orthogonal projection of $f$ onto the space of all $\mathcal{B}$-measurable functions (constant on every cell of $\mathcal{B}$). Note that if $\mathcal{B}'$ is a refinement of $\mathcal{B}$ then $\mathbb{E}(\mathbb{E}(f|\mathcal{B}')|\mathcal{B})=\mathbb{E}(f|\mathcal{B})$ and more generally $\mathbb{E}(f\cdot\mathbb{E}(g|\mathcal{B})|\mathcal{B'})=\mathbb{E}(f|\mathcal{B}')\mathbb{E}(g|\mathcal{B})$.
 
 \begin{lm}
  Let $n>0$ and let $\phi:\mathbb{Z}/p\mathbb{Z}\to\mathbb{T}$ be an pre-character and let $\chi=e^{2\pi i\phi}$. Let $\Gamma$ be a set of characters containing $\phi$ and let $\mathcal{B}=\mathcal{B}(\Gamma,n)$. Suppose that $f:\mathbb{Z}/p\mathbb{Z}\to\mathbb{C}$ is a function with $\|f\|_\infty\le 1$. Then
  \begin{equation*}
   \left|\langle f-\mathbb{E}(f|\mathcal{B}),\chi\rangle\right|\le\frac{2\pi}{n}.
  \end{equation*}
 \end{lm}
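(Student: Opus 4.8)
The plan is to use that $\mathbb{E}(f|\mathcal{B})$ is the orthogonal projection of $f$ onto the space of $\mathcal{B}$-measurable functions, which in particular makes this projection self-adjoint, and then to reduce the claim to a pointwise Lipschitz estimate for $\chi$.

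First I would replace $\chi$ on the right-hand side of the pairing by $\chi-\mathbb{E}(\chi|\mathcal{B})$. Indeed, by self-adjointness of the conditional expectation,
\[
\langle \mathbb{E}(f|\mathcal{B}),\chi\rangle=\langle f,\mathbb{E}(\chi|\mathcal{B})\rangle,
\]
so that
\[
\langle f-\mathbb{E}(f|\mathcal{B}),\chi\rangle=\langle f,\chi\rangle-\langle f,\mathbb{E}(\chi|\mathcal{B})\rangle=\langle f,\chi-\mathbb{E}(\chi|\mathcal{B})\rangle .
\]
By H\"older's inequality in the normalised $L^1$--$L^\infty$ form (recall that $\langle\cdot,\cdot\rangle$ is the averaged inner product) this gives
\[
\left|\langle f-\mathbb{E}(f|\mathcal{B}),\chi\rangle\right|\le\|f\|_\infty\,\|\chi-\mathbb{E}(\chi|\mathcal{B})\|_1\le\|\chi-\mathbb{E}(\chi|\mathcal{B})\|_\infty ,
\]
using $\|f\|_\infty\le 1$. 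Thus everything comes down to bounding $\|\chi-\mathbb{E}(\chi|\mathcal{B})\|_\infty$.

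Next I would estimate this sup-norm cell by cell. Fix a cell $Q$ of $\mathcal{B}$ and two points $x,y\in Q$. Since $\phi\in\Gamma$, the definition of $\mathcal{B}=\mathcal{B}(\Gamma,n)$ forces $\phi(x)$ and $\phi(y)$ to lie in a common interval $[\tfrac{k}{n},\tfrac{k+1}{n})\subset\mathbb{T}$, so their distance on $\mathbb{T}$ is less than $\tfrac{1}{n}$; using $\chi=e^{2\pi i\phi}$ and the elementary inequality $|e^{2\pi i s}-e^{2\pi i t}|=2|\sin(\pi(s-t))|\le 2\pi|s-t|$ we obtain $|\chi(x)-\chi(y)|\le\tfrac{2\pi}{n}$. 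Because $\mathbb{E}(\chi|\mathcal{B})(x)$ is a convex combination of the values $\chi(y)$, $y\in Q$, the triangle inequality yields $|\chi(x)-\mathbb{E}(\chi|\mathcal{B})(x)|\le\tfrac{2\pi}{n}$ for every $x$, i.e. $\|\chi-\mathbb{E}(\chi|\mathcal{B})\|_\infty\le\tfrac{2\pi}{n}$. Combining this with the previous display proves the lemma.

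There is no serious obstacle; the only point that needs care is the bookkeeping around the conditional expectation --- specifically that it is a self-adjoint orthogonal projection, which is what lets us move it from $f$ onto $\chi$ and thereby pair the bounded function $f$ against the small function $\chi-\mathbb{E}(\chi|\mathcal{B})$ rather than estimating $f-\mathbb{E}(f|\mathcal{B})$ directly (the latter is only bounded by $2$ in sup-norm and would lose a factor of $2$).
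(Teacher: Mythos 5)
Your proof is correct and follows essentially the same route as the paper: move the conditional expectation onto $\chi$ by self-adjointness of the orthogonal projection, then bound $\|\chi-\mathbb{E}(\chi|\mathcal{B})\|_\infty$ by $\frac{2\pi}{n}$ using the fact that $\phi\in\Gamma$ forces $\phi$ to vary by less than $\frac{1}{n}$ on each cell. Your write-up is in fact slightly more detailed than the paper's (the convex-combination step and the explicit Lipschitz bound for $e^{2\pi i t}$ are left implicit there).
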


 \begin{proof}
  The key idea is that $\chi$ is almost constant on each cell of $\mathcal{B}$. More precisely, by the properties of orthogonal projections we have
  \begin{equation*}
   \langle f-\mathbb{E}(f|\mathcal{B}),\chi\rangle=\langle f,\chi-\mathbb{E}(\chi|\mathcal{B})\rangle.
  \end{equation*}
  But since $\phi\in\Gamma$, the function $\chi-\mathbb{E}(\chi|\mathcal{B})$ is bounded pointwise by $|1-e^{2\pi i/n}|\le\frac{2\pi}{n}$ and the claim follows.
 \end{proof}
 
 \begin{cor}\label{uniformization}
  Let $\delta>0$ and let $f:\mathbb{Z}/p\mathbb{Z}\to\mathbb{C}$ with $\|f\|_\infty\le 1$. Then there exists a set $\Gamma$ of pre-characters of size $|\Gamma|\le\frac{4}{\delta^2}$ and $n\le\frac{16}{\delta}$ such that for $\mathcal{B}=\mathcal{B}(\Gamma,n)$ and any character $\chi$ we have
  \begin{equation}\label{fou_coeff}
   |\langle f-\mathbb{E}(f|\mathcal{B}),\chi\rangle|\le\delta.
  \end{equation}

 \end{cor}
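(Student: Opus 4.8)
The plan is to run the standard energy-increment argument, using the preceding lemma as the one-step input. Since $\|f\|_\infty\le 1$, the case $\delta\ge 1$ is trivial: take $\Gamma=\emptyset$ and $n=1$, so that $\mathcal{B}=\mathcal{B}(\emptyset,1)$ is the one-cell partition and $\mathbb{E}(f|\mathcal{B})$ is the constant $\mathbb{E}_x f(x)$; then $\langle f-\mathbb{E}(f|\mathcal{B}),\chi\rangle$ is $0$ for the trivial character and equals $\langle f,\chi\rangle$, of modulus $\le\|f\|_2\le 1\le\delta$, for every nontrivial $\chi$. So I may assume $\delta<1$ and fix once and for all $n=\lceil 4\pi/\delta\rceil$; then $n\le 4\pi/\delta+1\le 16/\delta$ (as $16-4\pi>1>\delta$) and, crucially, $2\pi/n\le\delta/2$.

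Next I would construct $\Gamma$ greedily. Put $\Gamma_0=\emptyset$ and $\mathcal{B}_k=\mathcal{B}(\Gamma_k,n)$. Given $\Gamma_k$: if $|\langle f-\mathbb{E}(f|\mathcal{B}_k),\chi\rangle|\le\delta$ for every character $\chi$, stop and output $\Gamma:=\Gamma_k$; otherwise choose a character $\chi$ with $|\langle f-\mathbb{E}(f|\mathcal{B}_k),\chi\rangle|>\delta$, write $\chi=e^{2\pi i\phi}$, and set $\Gamma_{k+1}=\Gamma_k\cup\{\phi\}$. Two remarks keep the bookkeeping honest. First, $\phi\notin\Gamma_k$: if it were, the preceding lemma applied to the pair $(\Gamma_k,\mathcal{B}_k)$ would give $|\langle f-\mathbb{E}(f|\mathcal{B}_k),\chi\rangle|\le 2\pi/n\le\delta/2<\delta$, contradicting the choice of $\chi$; hence each step enlarges $\Gamma$ by exactly one pre-character. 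Second, $\Gamma_{k+1}\supset\Gamma_k$ forces $\mathcal{B}_{k+1}$ to refine $\mathcal{B}_k$, so the $\mathcal{B}_k$-measurable functions form a subspace of the $\mathcal{B}_{k+1}$-measurable ones and the tower identity $\mathbb{E}(\mathbb{E}(f|\mathcal{B}_{k+1})|\mathcal{B}_k)=\mathbb{E}(f|\mathcal{B}_k)$ noted above applies.

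The driving quantity is the energy $E_k:=\|\mathbb{E}(f|\mathcal{B}_k)\|_2^2$, which lies in $[0,1]$ (conditional expectation is an orthogonal projection and $\|f\|_2\le 1$) and is nondecreasing in $k$. By the refinement and Pythagoras, $E_{k+1}-E_k=\|\mathbb{E}(f|\mathcal{B}_{k+1})-\mathbb{E}(f|\mathcal{B}_k)\|_2^2$. To bound the right side below, apply the preceding lemma to $(\Gamma_{k+1},\mathcal{B}_{k+1})$, which contains $\phi$, to get $|\langle f-\mathbb{E}(f|\mathcal{B}_{k+1}),\chi\rangle|\le 2\pi/n\le\delta/2$; the triangle inequality against $|\langle f-\mathbb{E}(f|\mathcal{B}_k),\chi\rangle|>\delta$ yields $|\langle\mathbb{E}(f|\mathcal{B}_{k+1})-\mathbb{E}(f|\mathcal{B}_k),\chi\rangle|>\delta/2$, and since $\|\chi\|_2=1$ Cauchy--Schwarz upgrades this to $E_{k+1}-E_k>\delta^2/4$. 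Hence the process terminates after fewer than $4/\delta^2$ steps, so the output $\Gamma$ satisfies $|\Gamma|\le 4/\delta^2$ and, by the stopping rule, obeys \eqref{fou_coeff} for every character; together with $n\le 16/\delta$ this is the claim.

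I do not expect a genuine obstacle: the content is all in the preceding lemma, and what remains is bookkeeping around the increment inequality. The one point needing a moment's thought is the calibration of $n$: the naive choice $n\approx 2\pi/\delta$ would make the lemma's error $2\pi/n$ comparable to the threshold $\delta$, leaving no room to conclude that the removed character contributes a definite amount of energy. Taking $n$ a fixed constant factor larger (here $\lceil 4\pi/\delta\rceil$) opens a gap of size $\delta/2$ between the lemma's bound and the threshold, which is exactly what forces each step to gain energy $>\delta^2/4$, capping the number of steps at $4/\delta^2$ while still respecting $n\le 16/\delta$.
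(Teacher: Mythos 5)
Your proposal is correct and follows essentially the same route as the paper: the same choice $n=\lceil 4\pi/\delta\rceil$, the same greedy addition of offending pre-characters, and the same Pythagoras/Cauchy--Schwarz energy increment of $\delta^2/4$ bounding the number of steps by $4/\delta^2$. The only differences are cosmetic (you handle $\delta\ge 1$ separately and note explicitly that the added pre-character is new, points the paper leaves implicit).
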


 \begin{proof}
  Define $n=\lceil\frac{4\pi}{\delta}\rceil$ and build the set $\Gamma$ iteratively, at the beginning $\Gamma=\emptyset$. At each stage we ask if the inequality \eqref{fou_coeff} is satisfied for every character. If so, we finish our procedure, otherwise we take a character $\chi$ for which the inequality fails and add the corresponding pre-character $\phi$ to $\Gamma$. Let $\mathcal{B}=\mathcal{B}(\Gamma,n)$ and $\mathcal{B}'=\mathcal{B}(\Gamma\cup\{\phi\},n)$. By the previous lemma we know that
  \begin{equation*}
   \left|\langle f-\mathbb{E}(f|\mathcal{B}'),\chi\rangle\right|\le\frac{2\pi}{n}\le\frac{\delta}{2}.
  \end{equation*}  
  Combining this with the initial assumption on $\chi$ and triangle inequality gives
  \begin{equation*}
   \left|\langle \mathbb{E}(f|\mathcal{B}')-\mathbb{E}(f|\mathcal{B}),\chi\rangle\right|\ge\frac{\delta}{2}.
  \end{equation*}
  Now we use Cauchy-Schwarz and the fact that $\mathbb{E}(f|\mathcal{B})$ is an orthogonal projection of $\mathbb{E}(f|\mathcal{B}')$ (as $\mathcal{B}'$ is a refinement of $\mathcal{B}$):
  \begin{equation*}
   \|\mathbb{E}(f|\mathcal{B}')\|_2^2-\|\mathbb{E}(f|\mathcal{B})\|_2^2=\|\mathbb{E}(f|\mathcal{B}')-\mathbb{E}(f|\mathcal{B})\|_2^2\ge |\langle \mathbb{E}(f|\mathcal{B}')-\mathbb{E}(f|\mathcal{B}),\chi\rangle|^2\ge\frac{\delta^2}{4}.
  \end{equation*}
  In other words, it means that adding to $\Gamma$ the pre-character corresponding to $\chi$ increases the value of $\|\mathbb{E}(f|\mathcal{B})\|_2^2$ by at least $\frac{\delta^2}{4}$. Since this quantity can only take values between $0$ and $1$, this procedure must terminate in at most $\frac{4}{\delta^2}$ steps. In the end we get a set $\Gamma$ of size at most $\frac{4}{\delta^2}$ satisfying the inequality \eqref{fou_coeff} for each character $\chi$.
  
  It remains to check that the bound on $n$ is correct. For that we can freely assume $\delta\le 1$, which implies $\frac{4\pi}{\delta}\ge4\pi>\frac{25}{2}$. However, for any such number $x$ we have the bound $\lceil x \rceil\le\frac{14}{13}x\le\frac{16}{4\pi}x$.
 \end{proof}

 The corollary above says that we can get rid of any large Fourier coefficients using only projections of bounded complexity. The heart of the arithmetic regularity lemma is to iterate this argument. Before we do that, le us explain what a growth function is. A \emph{growth function} is simply an increasing function $\mathcal{F}:(0,+\infty)\to (0,+\infty)$, typically describing how large we need one parameter to be in terms of another parameter. In most applications one can think of $\mathcal{F}$ as of an exponential function $x\mapsto C_1 e^{C_2x}$.
 
 \begin{prp}[arithmetic regularity lemma: baby version]
  Let $\epsilon>0$ and let $\mathcal{F}$ be a growth function. Then there exixts a real number $M_0=M_0(\epsilon,\mathcal{F})>0$ for which the following statement is true. Let $p$ be a prime and $f:\mathbb{Z}/p\mathbb{Z}\to [0,1]$ be a function. Then there exists a number $0<M\le M_0$ and a decomposition
  \begin{equation*}
   f=f_\mathrm{str}+f_\mathrm{sml}+f_\mathrm{unf}
  \end{equation*}
  satisfying the following properties:
  \begin{itemize}
   \item $f_\mathrm{str}=\mathbb{E}(f|\mathcal{B})$, where $\mathcal{B}=\mathcal{B}(\Gamma,n)$ for some set $\Gamma$ of pre-characters and some positive integer $n$ with $|\Gamma|,n\le M$ ($f_\mathrm{str}$ is structured),
   \item $\|f_\mathrm{sml}\|_2\le\epsilon$ ($f_\mathrm{sml}$ is small),
   \item $|\langle f_\mathrm{unf},\chi\rangle|\le\frac{1}{\mathcal{F}(M)}$ for every character $\chi$ ($f_\mathrm{unf}$ is $U^2$-uniform).
   \item $f_\mathrm{str}$ and $f_\mathrm{str}+f_\mathrm{sml}$ both take values in $[0,1]$.
  \end{itemize}
 \end{prp}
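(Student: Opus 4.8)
The plan is to run the standard energy-increment iteration of Corollary~\ref{uniformization}. Start from the trivial partition $\mathcal{B}_0=\mathcal{B}(\emptyset,1)$, of complexity $1$, and build a chain of successive refinements $\mathcal{B}_0,\mathcal{B}_1,\mathcal{B}_2,\dots$ as follows. Given $\mathcal{B}_i=\mathcal{B}(\Gamma_i,n_i)$ with $|\Gamma_i|,n_i\le M_i$, apply a refining version of Corollary~\ref{uniformization} to $f$ with accuracy $\delta_i=1/\mathcal{F}(M_i)$: one runs the greedy procedure from the proof of that corollary initialised at $\Gamma=\Gamma_i$ (instead of $\Gamma=\emptyset$) and with $n$ a common multiple of $n_i$ and $\lceil 4\pi/\delta_i\rceil$. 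The energy-increment count of that proof is unaffected --- each added pre-character raises $\|\mathbb{E}(f|\mathcal{B})\|_2^2$ by at least $\delta_i^2/4$, a quantity that lies in $[0,1]$ and only grows under refinement --- so the procedure terminates after at most $4/\delta_i^2$ additions and outputs a refinement $\mathcal{B}_{i+1}=\mathcal{B}(\Gamma_{i+1},n_{i+1})$ of $\mathcal{B}_i$ satisfying $|\langle f-\mathbb{E}(f|\mathcal{B}_{i+1}),\chi\rangle|\le\delta_i$ for every character $\chi$, with $|\Gamma_{i+1}|,n_{i+1}\le M_{i+1}$ for an explicit increasing $M_{i+1}=G(M_i)$ built solely from $\mathcal{F}$ (for instance $G(M)=16M\mathcal{F}(M)^2$).

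The decomposition then comes from a pigeonhole on energies. Writing $E_i=\|\mathbb{E}(f|\mathcal{B}_i)\|_2^2$, and using that $\mathbb{E}(f|\mathcal{B}_i)$ is the orthogonal projection of $\mathbb{E}(f|\mathcal{B}_{i+1})$ onto the $\mathcal{B}_i$-measurable functions, Pythagoras gives
\begin{equation*}
 \|\mathbb{E}(f|\mathcal{B}_{i+1})-\mathbb{E}(f|\mathcal{B}_i)\|_2^2=E_{i+1}-E_i\ge 0 ,
\end{equation*}
hence $0\le E_0\le E_1\le\cdots\le 1$. Running the iteration for $K:=\lceil 1/\epsilon^2\rceil$ steps, some index $i\in\{0,\dots,K-1\}$ has $E_{i+1}-E_i\le 1/K\le\epsilon^2$. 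Fix such an $i$ and set $M=M_i$, $f_\mathrm{str}=\mathbb{E}(f|\mathcal{B}_i)$, $f_\mathrm{sml}=\mathbb{E}(f|\mathcal{B}_{i+1})-\mathbb{E}(f|\mathcal{B}_i)$ and $f_\mathrm{unf}=f-\mathbb{E}(f|\mathcal{B}_{i+1})$.

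Checking the four properties is then immediate: $f_\mathrm{str}=\mathbb{E}(f|\mathcal{B})$ with $\mathcal{B}=\mathcal{B}_i=\mathcal{B}(\Gamma_i,n_i)$ and $|\Gamma_i|,n_i\le M$; next $\|f_\mathrm{sml}\|_2^2=E_{i+1}-E_i\le\epsilon^2$; next $|\langle f_\mathrm{unf},\chi\rangle|\le\delta_i=1/\mathcal{F}(M_i)=1/\mathcal{F}(M)$ for every $\chi$ by construction of $\mathcal{B}_{i+1}$; and finally $f_\mathrm{str}=\mathbb{E}(f|\mathcal{B}_i)$ and $f_\mathrm{str}+f_\mathrm{sml}=\mathbb{E}(f|\mathcal{B}_{i+1})$ are cellwise averages of $f\in[0,1]$, so they take values in $[0,1]$. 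Since the initial complexity is $1$ and $M=M_i\le G^{(K)}(1)$, one takes $M_0(\epsilon,\mathcal{F}):=G^{(K)}(1)$.

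The one delicate point is the bookkeeping of indices: the uniformization step at stage $i$ must be performed with accuracy $1/\mathcal{F}(M_i)$, not $1/\mathcal{F}(M_{i+1})$, so that the Fourier bound on $f_\mathrm{unf}$ comes out as $1/\mathcal{F}(M)$ with $M$ the complexity of $f_\mathrm{str}$ itself; this is exactly why the hypothesis is phrased with a growth function rather than a fixed threshold, since $\mathcal{F}$ must never be evaluated at the a priori unbounded complexity $M_{i+1}$. One should also check that the greedy construction of Corollary~\ref{uniformization} still works when started from a nonempty $\Gamma_i$ and an unnecessarily large $n$, but this is routine.
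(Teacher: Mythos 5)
Your proposal is correct and follows essentially the same route as the paper: the same refining version of Corollary~\ref{uniformization} applied at accuracy $1/\mathcal{F}(M_i)$ with $M_i$ the \emph{current} complexity, the same choice of $f_\mathrm{str}$, $f_\mathrm{sml}$, $f_\mathrm{unf}$ as consecutive conditional expectations and the remainder, and the same energy-increment bound on the number of stages. The only cosmetic difference is that you run a fixed number $K=\lceil 1/\epsilon^2\rceil$ of steps and pigeonhole for a small energy gap, whereas the paper stops at the first stage where $\|f_\mathrm{sml}\|_2\le\epsilon$ and argues that each failed stage raises the energy by more than $\epsilon^2$; these are interchangeable and give the same $M_0$.
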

 
 \begin{proof}
  We will again use an iterative procedure. At the beginning, let $\mathcal{B}$ be the trivial partition, corresponding to $\Gamma=\emptyset$ and $n=1$. At each stage, we set $M=\max(|\Gamma|,n)$ and then apply Corollary \ref{uniformization} with parameter $\delta=\frac{1}{\mathcal{F}(M)}$ to the function $f-\mathbb{E}(f|\mathcal{B})$. This way we get a set $\Gamma'$ and an integer $n'$, both bounded in terms of $M$ and $\mathcal{F}$. In fact, we need a slightly modified version of this result; to ensure that $\Gamma\subset\Gamma'$ and $n|n'$, we take at the beginning $n\cdot\lceil\frac{4\pi}{\delta}\rceil$ instead of $\lceil\frac{4\pi}{\delta}\rceil$ and $\Gamma$ istead of the empty set. This does not affect the boundedness of the final parameters, we still have the bounds of the shape $|\Gamma'|,n\le\mathcal{F}'(M)$ for some growth function $\mathcal{F}'$ depending only on $\mathcal{F}$. After applying this procedure we wish to set $f_\mathrm{str}=\mathbb{E}(f|\mathcal{B})$, $f_\mathrm{sml}=\mathbb{E}(f|\
mathcal{B})'-\mathbb{E}(f|\mathcal{B})$ and $f_\mathrm{unf}=f-\mathbb{E}(f|\mathcal{B})'$, where $\mathcal{B}'=\mathcal{B}(\Gamma',n')$. All the required conditions are clearly satisfied except one: it might happen that $\|f_\mathrm{sml}\|_2>\epsilon$. To take care of it, we use iteration: if this actually happened, set new $\Gamma:=\Gamma'$ and $n:=n'$. Again, we can argue that since $\mathcal{B}'$ is a refinement of $\mathcal{B}$, we have
  \begin{equation*}
   \|\mathbb{E}(f|\mathcal{B}')\|_2^2-\|\mathbb{E}(f|\mathcal{B})\|_2^2=\|\mathbb{E}(f|\mathcal{B}')-\mathbb{E}(f|\mathcal{B})\|_2^2=\|f_\mathrm{sml}\|_2^2>\epsilon^2
  \end{equation*}
  and therefore each iteration increases the value of $\|\mathbb{E}(f|\mathcal{B})\|_2^2$ by at least $\epsilon^2$, so we cannot have more than $\frac{1}{\epsilon^2}$ iterations in total. It means that in the end the result is true with $M_0=\underbrace{\mathcal{F'}(\ldots(\mathcal{F'}(1))\ldots)}_{\lfloor\frac{1}{\epsilon^2}\rfloor\text{ iterations}}.$
 \end{proof}

 The above version of the arithmetic regularity lemma is not quite satisfactory, as we expect to have a slightly different kind of structure for $f_\mathrm{str}$. Before we explain, how to fix that, let us expliot some properties of Fejer kernel.
 
 \begin{lm}
  Let $d$ and $K$ be positive integers, let $\chi_j:\mathbb{T}^d\to\mathbb{C}$ ($j=1,\ldots,d$) be the basic characters defined as $\chi_j(t)=e^{2\pi it_j}$ and let $\Phi_K:\mathbb{T}\to[0,+\infty)$ be the Fejer kernel of order $K$:
   \begin{equation*}
   \Phi_K(t)=\frac{1}{K^d}\prod_{j=1}^d\left|\sum_{k=0}^{K-1}\chi_j^k(t)\right|^2.
  \end{equation*}
  Then $\int_{\mathbb{T}^d}\Phi_K(t)dt=1$ and moreover
  \begin{equation*}
   \int_{[-\lambda,\lambda]^d}\Phi_K(t)dt\ge1-\frac{d}{4K\lambda^2}.
  \end{equation*}

 \end{lm}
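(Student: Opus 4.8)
The plan is to exploit the fact that $\Phi_K$ factors as a product of one-dimensional Fej\'er kernels. Since $\chi_j^k(t)=e^{2\pi ikt_j}$ depends only on the $j$-th coordinate, we may write $\Phi_K(t)=\prod_{j=1}^d\psi_K(t_j)$, where $\psi_K\colon\mathbb{T}\to[0,+\infty)$ is the one-variable kernel $\psi_K(s)=\frac1K\bigl|\sum_{k=0}^{K-1}e^{2\pi iks}\bigr|^2$. By Fubini this reduces both assertions to one-dimensional facts about $\psi_K$. For the mass-one identity I would expand $\bigl|\sum_{k=0}^{K-1}e^{2\pi iks}\bigr|^2=\sum_{k,l=0}^{K-1}e^{2\pi i(k-l)s}$ and integrate termwise over $\mathbb{T}$; only the $K$ diagonal terms survive, so $\int_{\mathbb{T}}\psi_K=1$ and hence $\int_{\mathbb{T}^d}\Phi_K=\prod_{j=1}^d\int_{\mathbb{T}}\psi_K(t_j)\,dt_j=1$.

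For the concentration estimate I would first observe that the case $\lambda>\tfrac12$ is trivial (then $[-\lambda,\lambda]$ is all of $\mathbb{T}$), so assume $\lambda\le\tfrac12$. Using the product structure again, $\int_{[-\lambda,\lambda]^d}\Phi_K=\bigl(\int_{-\lambda}^{\lambda}\psi_K\bigr)^d=(1-\rho)^d$ with $\rho:=\int_{\mathbb{T}\setminus[-\lambda,\lambda]}\psi_K\in[0,1]$, and Bernoulli's inequality gives $(1-\rho)^d\ge1-d\rho$. So it is enough to show $\rho\le\frac1{4K\lambda^2}$.

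To bound $\rho$ I would pass to the closed form of the kernel: summing the geometric series gives $\bigl|\sum_{k=0}^{K-1}e^{2\pi iks}\bigr|^2=\sin^2(\pi Ks)/\sin^2(\pi s)$, hence $\psi_K(s)=\sin^2(\pi Ks)/\bigl(K\sin^2(\pi s)\bigr)$. On $\mathbb{T}=[-\tfrac12,\tfrac12]$ one has $|\sin(\pi s)|\ge2|s|$ (from $\sin x\ge\frac2\pi x$ on $[0,\tfrac\pi2]$), and trivially $\sin^2(\pi Ks)\le1$, so $\psi_K(s)\le\frac1{4Ks^2}$ there. Consequently
\[
 \rho=\int_{\lambda\le|s|\le1/2}\psi_K(s)\,ds\le2\int_{\lambda}^{\infty}\frac{ds}{4Ks^2}=\frac1{2K\lambda}\le\frac1{4K\lambda^2},
\]
the final inequality using $\lambda\le\tfrac12$. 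Combining with the previous paragraph yields $\int_{[-\lambda,\lambda]^d}\Phi_K\ge1-d\rho\ge1-\frac{d}{4K\lambda^2}$, as claimed.

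Every step is elementary, so I do not expect a genuine obstacle; the only points needing care are keeping the $\mathbb{T}=\mathbb{R}/\mathbb{Z}$ normalisation consistent (so that the closed form involves $\pi$, not $2\pi$, and the elementary sine bound reads $|\sin\pi s|\ge2|s|$), and remembering to separate off the degenerate range $\lambda>\tfrac12$ before treating $[-\lambda,\lambda]^d$ as a product of genuine subintervals of $\mathbb{T}$.
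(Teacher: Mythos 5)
Your proof is correct and follows essentially the same route as the paper: factor $\Phi_K$ into one-dimensional Fej\'er kernels, verify mass one by expanding in characters, bound the one-dimensional kernel pointwise by $\frac{1}{4Ks^2}$ outside the origin, and finish with $(1-x)^d\ge 1-dx$. The only (harmless) differences are that you integrate the pointwise bound over the tail rather than using sup times measure, and you explicitly dispose of the degenerate case $\lambda>\tfrac12$.
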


 \begin{proof}
  The first assertion is standard; to prove it one only needs to expand $\Phi_K$ as the linear combination of characters and observe that the trivial character comes with coefficient $1$. To prove the inequality, let us first note that
  \begin{equation*}
   \left|\sum_{k=0}^{K-1}\chi_j^k(t)\right|=\left|\frac{1-\chi_j^K(t)}{1-\chi_j(t)}\right|\le\frac{2}{|1-e^{2\pi it_j}|}\le\frac{1}{2\|t_j\|}.
  \end{equation*}
  Therefore if $d=1$, we have 
  \begin{equation*}
   \int_{[-\lambda,\lambda]}\Phi_K(t)dt=1-\int_{\|t\|\ge\lambda}\Phi_K(t)dt\ge 1-\sup_{\|t\|\ge\lambda}\Phi_K(t)=1-\frac{1}{4K\lambda^2}
  \end{equation*}. 
  Now for $d>1$, the $d$-dimensional Fejer kernel is just a product of $d$ copies of a $1$-dimensional one, which gives the bound
  \begin{equation*}
   \int_{[-\lambda,\lambda]^d}\Phi_K(t)dt\ge\left(1-\frac{1}{4K\lambda^2}\right)^d\ge 1-\frac{d}{4K\lambda^2}.
  \end{equation*}
 \end{proof}

 Before stating the next result let us set a default norm on $\mathbb{T}^d$ to be the maximum norm, i.e. $\|t\|=\max_{1\le j\le d}\|t_j\|$. Consequently, we call a function $F:\mathbb{T}^d\to\mathbb{C}$ $M$-Lipschitz, if $|F(t_1)-F(t_2)|\le M\|t_1-t_2\|$ holds for all $t_1,t_2\in\mathbb{T}$. 
 
 \begin{prp}[arithmetic regularity lemma: intermediate version]
  Let $\epsilon>0$ and let $\mathcal{F}$ be a growth function. Then there exixts a real number $M_0=M_0(\epsilon,\mathcal{F})>0$ for which the following statement is true. Let $p>p_0(\epsilon,\mathcal{F})$ be a prime and $f:\mathbb{Z}/p\mathbb{Z}\to [0,1]$ be a function. Then there exists a number $0<M\le M_0$ and a decomposition
  \begin{equation*}
   f=f_\mathrm{str}+f_\mathrm{sml}+f_\mathrm{unf}
  \end{equation*}
  satisfying the following properties:
  \begin{itemize}
   \item $f_\mathrm{str}=F\circ\phi$, where $\phi:\mathbb{Z}/p\mathbb{Z}\to\mathbb{T}^d$ is a homomorphism with $d\le M$ and $F:\mathbb{T}^d\to [0,1]$ is an $M$-Lipschitz function ($f_\mathrm{str}$ is structured),
   \item $\|f_\mathrm{sml}\|_2\le\epsilon$ ($f_\mathrm{sml}$ is small),
   \item $|\langle f_\mathrm{unf},\chi\rangle|\le\frac{1}{\mathcal{F}(M)}$ for every character $\chi$ ($f_\mathrm{unf}$ is $U^2$-uniform).
   \item $f_\mathrm{str}$ and $f_\mathrm{str}+f_\mathrm{sml}$ both take values in $[0,1]$.
   \end{itemize}
 \end{prp}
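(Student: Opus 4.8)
The plan is to deduce the intermediate version from the \emph{baby version} already proved, by replacing its step-function structured part with its smoothing against a Fej\'er kernel on $\mathbb{T}^d$.

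First I would apply the baby version to $f$ with parameter $\epsilon/2$ and a growth function $\mathcal{G}$ to be pinned down at the end. This yields some $M'\le M_0(\epsilon/2,\mathcal{G})$, a set $\Gamma=\{\phi_1,\dots,\phi_d\}$ of pre-characters with $d\le M'$, an integer $n\le M'$, and a decomposition $f=g_{\mathrm{str}}+g_{\mathrm{sml}}+f_{\mathrm{unf}}$ with $g_{\mathrm{str}}=\mathbb{E}(f|\mathcal{B})$, $\mathcal{B}=\mathcal{B}(\Gamma,n)$, $\|g_{\mathrm{sml}}\|_2\le\epsilon/2$, $|\langle f_{\mathrm{unf}},\chi\rangle|\le\mathcal{G}(M')^{-1}$ for all $\chi$, and $g_{\mathrm{str}},g_{\mathrm{str}}+g_{\mathrm{sml}}\in[0,1]$. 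Since the cell of $\mathcal{B}$ containing $x$ depends only on the point $\phi(x)\in\mathbb{T}^d$, where $\phi=(\phi_1,\dots,\phi_d)$, we can write $g_{\mathrm{str}}=F_0\circ\phi$ for a function $F_0:\mathbb{T}^d\to[0,1]$ that is constant on every box $\prod_j[\tfrac{k_j}{n},\tfrac{k_j+1}{n})$. Keeping $f_{\mathrm{unf}}$ untouched, I would set $f_{\mathrm{str}}:=F\circ\phi$ with $F:=F_0*\Phi_K$ (convolution on $\mathbb{T}^d$ with respect to Haar measure), where $\Phi_K$ is the Fej\'er kernel of order $K$ and $K$ is chosen below. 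Because $\Phi_K\ge0$ and $\int_{\mathbb{T}^d}\Phi_K=1$, $F$ is $[0,1]$-valued; and differentiating under the integral and using that $\Phi_K$ is a product of one-dimensional Fej\'er kernels together with Bernstein's inequality gives $\|\partial_jF\|_\infty\le\|F_0\|_\infty\|\partial_j\Phi_K\|_1\le 2\pi K^2$, so $F$ is $2\pi dK^2$-Lipschitz in the maximum norm.

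The heart of the argument — and the step I expect to be the main obstacle — is showing that $\|f_{\mathrm{str}}-g_{\mathrm{str}}\|_2=\|F\circ\phi-F_0\circ\phi\|_2$ is small enough to be absorbed into the small part. Splitting the convolution defining $F$ at $\|u\|\le\lambda$ and invoking the Fej\'er tail bound from the Lemma, one gets the pointwise estimate $|F(t)-F_0(t)|\le 1_{S_\lambda}(t)+\tfrac{d}{4K\lambda^2}$, where $S$ is the union of the grid hyperplanes $\{t:t_j\in\tfrac1n\mathbb{Z}\}$ and $S_\lambda$ its $\lambda$-neighborhood; hence $\|f_{\mathrm{str}}-g_{\mathrm{str}}\|_2^2\le 2\,\mathbb{E}_x1_{S_\lambda}(\phi(x))+2\big(\tfrac{d}{4K\lambda^2}\big)^2$. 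To bound the first term — the danger being that $\phi(\mathbb{Z}/p\mathbb{Z})$ clusters near the hyperplanes — I would use that each nontrivial $\phi_j$, since $p$ is prime and $n<p$, is a bijection from $\mathbb{Z}/p\mathbb{Z}$ onto $\tfrac1p\mathbb{Z}/\mathbb{Z}$, so at most $2n\lambda p+n$ values of $x$ have $\phi_j(x)$ within $\lambda$ of $\tfrac1n\mathbb{Z}$; a union bound over $j$ gives $\mathbb{E}_x1_{S_\lambda}(\phi(x))\le 2dn\lambda+dn/p$. Choosing $\lambda:=\tfrac{\epsilon^2}{400\,dn}$, then $K:=\lceil\tfrac{d}{\epsilon\lambda^2}\rceil$, and taking $p$ larger than a threshold depending only on $\epsilon$ and $\mathcal{F}$ makes all three error terms at most $\epsilon^2/8$, whence $\|f_{\mathrm{str}}-g_{\mathrm{str}}\|_2\le\epsilon/2$. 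Then $f_{\mathrm{sml}}:=g_{\mathrm{sml}}+(g_{\mathrm{str}}-f_{\mathrm{str}})$ has $\|f_{\mathrm{sml}}\|_2\le\epsilon$, while $f_{\mathrm{str}}+f_{\mathrm{sml}}=g_{\mathrm{str}}+g_{\mathrm{sml}}\in[0,1]$, as required.

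It remains to close up the parameters consistently. Both $d$ and the Lipschitz constant of $F$ are at most $M:=\max(M',\lceil2\pi dK^2\rceil)$, and since $\lambda$ and $K$ were chosen as explicit increasing functions of $d,n\le M'$ and of $\epsilon$, we have $M\le M''(M',\epsilon)$ for an explicit increasing $M''$. To guarantee $\mathcal{G}(M')^{-1}\le\mathcal{F}(M)^{-1}$ it then suffices to define $\mathcal{G}(m):=\mathcal{F}(M''(m,\epsilon))$, which depends only on $\mathcal{F}$ and $\epsilon$; with this choice the baby version supplies $M_0(\epsilon/2,\mathcal{G})$, and one takes $M_0:=M''(M_0(\epsilon/2,\mathcal{G}),\epsilon)$ and $p_0$ equal to the threshold from the previous paragraph. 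Since $\mathcal{G}$ is defined purely in terms of $\mathcal{F}$ and $\epsilon$ before the baby version is invoked, no circularity arises; degenerate cases ($d=0$, so $F_0$ is already constant, or $\epsilon\ge1$) are immediate.
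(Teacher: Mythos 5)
Your proposal is correct and follows essentially the same route as the paper: apply the baby version with parameter $\epsilon/2$ and a precomposed growth function, realise the structured part as a step function $F_0\circ\phi$ on $\mathbb{T}^d$, smooth $F_0$ by convolving with the Fej\'er kernel $\Phi_K$, and control the $L^2$ error by separating the points where $\phi(x)$ lands near a grid hyperplane from those where the Fej\'er tail bound applies, before closing the growth-function bookkeeping exactly as the paper does. The only divergence is your Lipschitz estimate via Young's inequality and Bernstein ($\|\partial_j F\|_\infty\le\|F_0\|_\infty\|\partial_j\Phi_K\|_1=O(K)$), which gives $O(dK^2)$ in place of the paper's $O(dK^{d+1})$ obtained by bounding the sup-norm Lipschitz constant of $\Phi_K$ itself --- a sharper but interchangeable variant that only alters the explicit form of $M''$.
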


 \begin{proof}
  First we apply to $f$ the baby version with some different parameters $\epsilon'$ and $\mathcal{F}'$ to be specified later. We get a decomposition $f=f_\mathrm{str}'+f_\mathrm{sml}'+f_\mathrm{unf}'$; now we set $f_\mathrm{unf}=f_\mathrm{unf}'$ and try to find $f_\mathrm{str}$ of the new type so that $\|f_\mathrm{str}-f_\mathrm{str}'\|_2$ is small and in the end set $f_\mathrm{sml}=f_\mathrm{sml}'+f_\mathrm{str}'-f_\mathrm{str}$. We know the structure of $f_\mathrm{str}'$; it can be alternatively said that $f_\mathrm{str}'=F'\circ\phi$, where $\phi=(\phi_1,\ldots,\phi_{d}):\mathbb{Z}/p\mathbb{Z}\to\mathbb{T}^{d}$ is just a product of all pre-characters forming $\Gamma$ and $F':\mathbb{T}^{d}\to [0,1]$ is a function that is constant on the cubes of the form $[\frac{k_1}{n},\frac{k_1+1}{n}]\times\ldots\times[\frac{k_d}{n},\frac{k_d+1}{n}]$ for $k_1,\ldots,k_d\in\mathbb{Z}$. The function $F'$ does not need to be unique, we can pick any that fits into the formula. Also, it does not need to be Lipschitz and we 
have to fix that. To do this, put $F=F'*\Phi_K$ for some $K$ to be specified. To see that $F$ is Lipschitz, let us calculate
  \begin{multline*}
   |F(t_1)-F(t_2)|=\left|\int_{\mathbb{T}^d}F'(s)(\Phi_K(t_1-s)-\Phi_K(t_2-s))\right|\le\\
   \le\sup_s|\Phi_K(t_1-s)-\Phi_K(t_2-s)|.
  \end{multline*}
  The above calculation shows that the Lipschitz constant of $F$ is bounded by that of $\Phi_K$. To estimate it, let us note that $\Phi_K$ is a linear combination of characters of the form $\prod_{j=1}^d\chi_j^{k_j}$ with $|k_j|<K$, and a character of this particular form comes with coefficient $\prod_{j=1}^d(1-\frac{|k_j|}{K})$ and is itself a $(2\pi\sum_{j=1}^d|k_j|)$-Lipschitz function. Therefore the Lipschitz constant of $\Phi_K$ is at most
  \begin{equation*}
   L_d=2\pi\sum_{k_1,\ldots,k_d}\left(\left(\sum_{j=1}^d|k_j|\right)\cdot\prod_{j=1}^d\left(1-\frac{|k_j|}{K}\right)\right).
  \end{equation*}
  To calculate this, let us set $L_d'=\sum_{k_1,\ldots,k_d}\prod_{j=1}^d\left(1-\frac{|k_j|}{K}\right)$. Then one can check that those sequenced satisfy the recurrence $L_{d_1+d_2}=L_{d_1}L_{d_2}'+L_{d_2}L_{d_1}'$ and $L'_{d_1+d_2}=L'_{d_1}+L'_{d_2}$, which together with the boundary conditions $L_1=\frac{2\pi}{3}(K^2-1)$, $L_1'=K$ gives $L_d=\frac{2\pi}{3}dK^{d-1}(K^2-1)\le 4d K^{d+1}$.
  
  Set $f_\mathrm{str}=F\circ\phi$. We would like to bound the expression
  \begin{equation*}
   \|f_\mathrm{str}'-f_\mathrm{str}\|_2^2=\mathbb{E}_x|F'(\phi(x))-F'*\Phi_K(\phi(x))|^2.
  \end{equation*}
  Inside the expectation, some of the elements $s$ will lie near the edges of the cubes and for them it would be hard to estimate the value $|F'(\phi(x))-F'*\Phi_K(\phi(x))|$ other than trivially by $1$. Let us estimate the number of such ``bad'' elements: the set of all $t_j\in\mathbb{T}$ with $\|t_j-\frac{k_j}{n}\|\le\lambda$ has measure $2\lambda$; since $p$ is sufficiently large we can assume that the set of all $x$ with $\|\phi_j(x)-\frac{k_j}{n}\|\le\lambda$ has size at most $4\lambda p$. Taking into account all possible values of $j$ and $k_j$ we see that all but at most $4\lambda dnp$ elements are separated from the boundary of their cubes by at least $\lambda$. For those elements $x$ let us estimate
  \begin{equation*}
   |F'(\phi(x))-F'*\Phi_K(\phi(x))|\le\int_\mathbb{T}\Phi_K(t)|F'(\phi(x))-F'(\phi(x)-t)|dt.
  \end{equation*}
  By the description of $x$ the latter factor is zero on the cube $[-\lambda,\lambda]^d$; on the remaining set it is bounded by $1$, so by the previous lemma the value of the integral is bounded by $\frac{d}{4K\lambda^2}$. In the end, taking into account all values of $x$, we have an estimate
  \begin{equation*}
   \|f_\mathrm{str}'-f_\mathrm{str}\|_2^2=\mathbb{E}_x|F'(\phi(x))-F'*\Phi_K(\phi(x))|^2\le 4\lambda dn+\left(\frac{d}{4K\lambda^2}\right)^2.
  \end{equation*}
  We wish the last quantity to be at most $\frac{\epsilon}{2}$; to achieve this set $\lambda=\frac{\epsilon}{16dn}$ and $K=\lceil\frac{d}{2\lambda^2\sqrt{\epsilon}}\rceil$.
  
  Now we return to the beginning, where we had to specify $\epsilon'$ and $\mathcal{F}'$. We can take $\epsilon'=\frac{\epsilon}{2}$, then $\|f_\mathrm{sml}\|_2\le\|f_\mathrm{sml}'\|_2+\|f_\mathrm{str}'-f_\mathrm{str}\|_2\le\frac{\epsilon}{2}+\frac{\epsilon}{2}=\epsilon$. To choose $\mathcal{F}'$, let us first note that \begin{equation*}
    K\le\frac{d}{\lambda^2\sqrt{\epsilon}}=\frac{2^8d^3n^2}{\epsilon^{5/2}}\le\frac{2^8M^5}{\epsilon^{5/2}}.                                                                                                                                                                                                                                                                                                                            
  \end{equation*}
  The Lipschitz constant of $F$ is then bounded by
  \begin{equation*}
   4d K^{d+1}\le 4M\left(\frac{2^8M^5}{\epsilon^{5/2}}\right)^{M+1}=:a(M,\epsilon).
  \end{equation*}
  It is now enough to take $\mathcal{F}'(M)=\mathcal{F}(a(M,\epsilon))$ and $M_0=M_0(\epsilon',\mathcal{F}')$ given by the previous version of the lemma.
 \end{proof}

 Now the structure of $f_\mathrm{str}$ appears to be more natural, although we are still missing some information. We would like to know that the image of the homomorphism $\phi$ is well equidisributed in $\mathbb{T}^d$ so that we could expect that $f_\mathrm{str}$ has roughly the same global structure as $F$. To achieve this, let us set a notion of $K$-independence. The homomorphism $\phi=(\phi_1,\ldots,\phi_d)$ will be called $K$-independent if the only solution to the equation $k_1\phi_1+\ldots+k_d\phi_d=0$ with $|k_j|<K$ is $k_1=\ldots=k_d=0$. We will show how we can require independence, in particular what to do if $\phi$ turns out not to be independent.
 
 \begin{lm}\label{matrix}
  Let $(a_1,\ldots a_d)$ be a vector with integer coordinates. There exists a matrix $A=[a_{ij}]\in \mathcal{M}_d(\mathbb{Z})$ with $a_{1j}=a_j$ (for $j=1,\ldots,d$) and satisfying the following properties:
  \begin{align*}
   \sum_{j=1}^da_{j}a_{ij}&=0\qquad\text{for }i=2,\dots,d,\\
   \det A&=\frac{\sum_{j=1}^da_j^2}{\gcd(a_1,\ldots,a_d)}.
  \end{align*}
 Moreover, if $\displaystyle{\max_{1\le j\le d}}|a_j|\le K$, then we can choose the entries of the matrix $A$ to be bounded by $K$. 
 \end{lm}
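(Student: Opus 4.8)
The plan is to construct the matrix $A$ by completing the row $(a_1,\ldots,a_d)$ to a full basis-like system in a controlled way, and then compute the determinant. First I would dispose of the degenerate cases: if all $a_j=0$ the statement is vacuous after interpreting $\gcd$ suitably, so assume $\gcd(a_1,\ldots,a_d)=g>0$ and write $a_j=gb_j$ with $\gcd(b_1,\ldots,b_d)=1$. The key linear-algebra input is that the sublattice $\Lambda=\{(k_2,\ldots,k_d)\in\mathbb{Z}^{d-1}\}$ does not immediately help; instead I would work with the lattice $L=\{v\in\mathbb{Z}^d : \langle v, (b_1,\ldots,b_d)\rangle = 0\}$, which is a rank-$(d-1)$ primitive sublattice of $\mathbb{Z}^d$ (primitivity because $\gcd(b_j)=1$). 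Rows $2,\ldots,d$ of $A$ must be chosen inside $L$, and they must be chosen so that together with the first row $(a_1,\ldots,a_d)=g(b_1,\ldots,b_d)$ they span a sublattice of the right index.

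The cleanest route is an explicit one: take the rows $R_i$ for $i=2,\ldots,d$ to be the ``rotation'' vectors that are classical in this context, e.g. for $i$ in a suitable range put $R_i$ to be supported on two coordinates $j_1<j_2$ with entries $\pm b_{j_2}, \mp b_{j_1}$; more systematically, one lets $R_i = (b_1\cdots b_{i-1})\cdot e_i^{\perp}$-type vectors obtained from the standard construction that realizes $L$ with a basis whose Gram determinant is $\sum b_j^2$. Concretely I would use the fact that the vector $b=(b_1,\ldots,b_d)$ together with any integral basis $v_2,\ldots,v_d$ of $L$ forms a basis of the lattice $\{v : \langle v,b\rangle \in (\sum b_j^2)\mathbb{Z}\}$ — wait, more precisely, the matrix with rows $b, v_2,\ldots, v_d$ has determinant equal to $\|b\|^2$ times the covolume normalization, and one checks $|\det| = \sum b_j^2$ by computing $\det\det^{T}$: the Gram matrix is block-diagonal with blocks $\|b\|^2$ and $\mathrm{Gram}(v_2,\ldots,v_d)$, and the latter equals $\|b\|^2$ as well when the $v_i$ are chosen as the standard generators of $b^\perp$. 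Then scaling the first row by $g$ multiplies the determinant by $g$ and, after dividing through, one gets $\det A = g\cdot \sum b_j^2 / 1$; rewriting in terms of $a_j = g b_j$ gives $\det A = \sum a_j^2 / g = \sum a_j^2/\gcd(a_1,\ldots,a_d)$, as claimed. The orthogonality relations $\sum_j a_j a_{ij}=0$ hold by construction since each $R_i\in L = b^\perp$ and $a = gb$.

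For the size bound, I would choose the basis $v_2,\ldots,v_d$ of $b^\perp$ to be the ``staircase'' vectors: order the coordinates so that the relevant partial gcds behave well, and take $v_i$ supported on coordinates $\{1,\ldots,i\}\cup\{i+1\}$ with entries that are ratios of consecutive partial gcds of the $b_j$'s; these are the vectors appearing in the standard proof that $b^\perp$ has a basis with entries bounded by $\max_j|b_j|$. Since $|b_j|\le|a_j|\le K$, all entries are bounded by $K$, and the first row has entries $a_j$ with $|a_j|\le K$ as well, giving the required bound on $A$. The main obstacle I anticipate is precisely this last point: producing an \emph{explicit} integral basis of $b^\perp$ whose entries are genuinely bounded by $\max_j|b_j|$ (rather than by some product of the $b_j$) requires care — the naive cofactor/cross-product construction gives entries as large as products of $d-1$ of the $b_j$. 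The fix is the inductive staircase construction, peeling off one coordinate at a time using Bézout on partial gcds $g_i=\gcd(b_1,\ldots,b_i)$ and noting $g_i/g_{i-1}\le |b_i|/1 \le K$ and $b_i/g_i \le K$; assembling these into the rows of $A$ and verifying both the determinant identity (via the Gram-matrix block computation) and the entrywise bound is the technical heart of the argument.
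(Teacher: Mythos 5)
Your overall strategy is different from the paper's: you work with the primitive lattice $L=b^\perp\cap\mathbb{Z}^d$ for $b=a/g$, invoke the covolume identity $\operatorname{covol}(L)=\|b\|$ via the Gram-matrix block computation, and then try to exhibit a bounded basis of $L$; the paper instead builds $A$ row by row by induction on $d$, appending a last column $(a_d,0,\ldots,0,\gcd(a_1,\ldots,a_{d-1})/\gcd(a_1,\ldots,a_d))^T$ and a last row, and computes the determinant directly by cofactor expansion, so it never needs to verify that the chosen rows form a $\mathbb{Z}$-basis of $b^\perp\cap\mathbb{Z}^d$. Your route can be made to work, but it carries an extra obligation you do not discharge: the ``staircase'' vectors must be shown to generate all of $L$ (not merely a finite-index sublattice), since otherwise the determinant comes out as $\sum a_j^2/g$ times that index.

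The genuine gap, however, is the entrywise bound, and you have correctly located it but not closed it. At step $i$ of the staircase you must solve
\begin{equation*}
 \sum_{j\le i}\lambda_{ij}\,\frac{b_j}{g_i}=\frac{b_{i+1}}{g_{i+1}},
\end{equation*}
where $g_i=\gcd(b_1,\ldots,b_i)$, and you need \emph{all} the Bézout coefficients $\lambda_{ij}$ bounded by $K$, not just the diagonal entries $g_i/g_{i+1}$ and $b_{i+1}/g_{i+1}$ that your inequalities control. A generic Bézout solution for two coprime integers of size up to $K$ hitting a target of size up to $K$ has coefficients as large as $K^2$. The missing ingredient is exactly the paper's second, auxiliary lemma: if $b_1,\ldots,b_m$ are coprime integers with $|b_j|\le K$ and $|b|\le K$, then there exist $c_1,\ldots,c_m$ with $|c_j|\le K$ and $\sum b_jc_j=b$. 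This is proved there by its own induction (the $m=2$ case shifts $b$ by multiples of the smaller $b_j$ until the result is divisible by the larger one, and the general case reduces to $m=2$ via the partial gcd). Without this statement, or an equivalent, your proposal does not yield entries of $A$ bounded by $K$; with it, your construction goes through. I would also note, as a minor point, that your Gram computation only pins down $|\det A|$, so you should fix the sign by negating a row if necessary.
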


 \begin{proof}
  Let us prove the claim by induction on $d$. For $d=1$ the statement is trivial. Suppose $d>1$ and we have already proved it for $d-1$. We would like to extend the matrix found for the vector $(a_1,\ldots,a_{d-1})$ to make it work for $(a_1,\ldots,a_d)$. Setting $a_{id}=0$ for $i=2,\ldots,d-1$ makes the first property satisfied for those values of $i$. Also, it makes the determinant quite easy to calculate by expanding it with respect to the last column. Since
  \begin{equation*}
   \frac{\sum_{j=1}^da_j^2}{\gcd(a_1,\ldots,a_d)}=\frac{\sum_{j=1}^{d-1}a_j^2}{\gcd(a_1,\ldots,a_{d-1})}\cdot\frac{\gcd(a_1,\ldots,a_{d-1})}{\gcd(a_1,\ldots,a_d)}+\frac{a_d^2}{\gcd(a_1,\ldots,a_d)},
  \end{equation*}
  it looks reasonable to set $a_{dd}=\frac{\gcd(a_1,\ldots,a_{d-1})}{\gcd(a_1,\ldots,a_d)}\in\mathbb{Z}$ and try to complete the last row so that the submatrix $B$ obtained by deleting the first row and the last column has determinant $\frac{(-1)^{d-1}a_d}{\gcd(a_1,\ldots,a_d)}\in\mathbb{Z}$. Note that the formula
  \begin{equation*}
   \det B=\frac{(-1)^{d}\sum_{j=1}^{d-1}a_ja_{dj}}{\gcd(a_1,\ldots,a_{d-1})}
  \end{equation*}
  is true if we set $a_{dj}=a_{ij}$ for some $1\le i\le d-1$ and all $j=1,\ldots d-1$. Since the vectors $(a_{ij})_{j=1}^{d-1}$ span all of $\mathbb{R}^{d-1}$ (as the determinant of the matrix they form is non-zero by the inductive hypothesis), the formula above is in fact true for any choice of $a_{d,1},\ldots, a_{d,d-1}$. This is good for us --- if we insist that
  \begin{equation*}
   0=\sum_{j=1}^da_ja_{dj}=\sum_{j=1}^{d-1}a_ja_{dj}+\frac{a_d\gcd(a_1,\ldots,a_{d-1})}{\gcd(a_1,\ldots,a_d)},
  \end{equation*}
  then automatically we have
  \begin{multline*}
   \det A=a_{dd}\cdot\frac{\sum_{j=1}^{d-1}a_j^2}{\gcd(a_1,\ldots, a_{d-1})}+(-1)^{d-1}a_d\cdot\det B=\\
   \frac{\sum_{j=1}^{d-1}a_j^2}{\gcd(a_1,\ldots, a_d)}-\frac{a_d\sum_{j=1}^{d-1}a_ja_{dj}}{\gcd(a_1,\ldots,a_{d-1})}=\frac{\sum_{j=1}^{d}a_j^2}{\gcd(a_1,\ldots, a_d)}.
  \end{multline*}
  So the only condition remaining is $\sum_{j=1}^{d-1}a_ja_{dj}=\frac{-a_d}{\gcd(a_1,\ldots,a_d)}\cdot\gcd(a_1,\ldots,a_{d-1})$. This can be satisifed by the Euclidean algorithm since $\frac{-a_d}{\gcd(a_1,\ldots,a_d)}\in\mathbb{Z}$, and thus we have proved the existence of the matrix $A$. 

  Now let us consider the bounds for the entries. Obviously if $|a_j|\le K$, then $|a_{dd}|=\left |\frac{\gcd(a_1,\ldots,a_{d-1})}{\gcd(a_1,\ldots,a_d)}\right |\le K$. Choosing the vector $(a_{d,1},\ldots,a_{d,d-1})$ carefully might be a little bit more complicated. But if we write the equation in the form
  \begin{equation*}
   \sum_{j=1}^{d-1}a_{dj}\cdot \frac{a_j}{\gcd(a_1,\ldots,a_{d-1})}=-\frac{a_d}{\gcd(a_1,\ldots,a_d)},
  \end{equation*}
  we can see that the claim boils down to the lemma below.
 \end{proof}

 \begin{lm}
  Let $m,K>0$ be integers and let $b_1,\ldots,b_m$ be coprime integers not exceeding $K$ in absolute value. Let $b$ be an integer with $|b|\le K$. Then there exist integers $c_1,\ldots, c_m$ not exceeding $K$ in absolutee value and satisfying
  \begin{equation*}
   b_1c_1+\ldots+b_mc_m=b.
  \end{equation*}
 \end{lm}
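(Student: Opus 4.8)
The plan is to argue by induction on $m$. The base case $m=1$ is immediate: coprimality of a single integer means $b_1=\pm 1$, so $c_1=b/b_1$ satisfies $|c_1|=|b|\le K$. For the inductive step I would factor out the greatest common divisor of the first $m-1$ coefficients, solve a two-variable equation with small coefficients, and then invoke the lemma for $m-1$.

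\textbf{The reduction.} Fix $m\ge 2$ and assume the statement for $m-1$. Put $g=\gcd(b_1,\ldots,b_{m-1})$. If $g=0$ then $b_1=\cdots=b_{m-1}=0$, so $|b_m|=\gcd(b_1,\ldots,b_m)=1$ and one may take $c_1=\cdots=c_{m-1}=0$ and $c_m=b/b_m$. Otherwise $1\le g\le\max_{j<m}|b_j|\le K$ and $\gcd(g,b_m)=1$. The heart of the matter is the two-variable claim: there exist integers $s,c_m$ with $|s|,|c_m|\le K$ and $gs+b_mc_m=b$. Granting this, write $b_j=gb_j'$ for $j<m$; then $\gcd(b_1',\ldots,b_{m-1}')=1$, each $|b_j'|=|b_j|/g\le K$, and $|s|\le K$, so the induction hypothesis yields $c_1,\ldots,c_{m-1}$ with $|c_j|\le K$ and $\sum_{j<m}b_j'c_j=s$. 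Then $\sum_{j=1}^m b_jc_j=g\sum_{j<m}b_j'c_j+b_mc_m=gs+b_mc_m=b$, as required.

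\textbf{Proof of the two-variable claim, and the main obstacle.} Starting from any B\'ezout solution $gs_0+b_mc_0=b$, the general solution is $(s,c_m)=(s_0-tb_m,\;c_0+tg)$ for $t\in\mathbb{Z}$; choosing $t$ so that $c_m$ is the representative of $c_0$ modulo $g$ lying in $(-g/2,g/2]$ gives $|c_m|\le g/2\le K$. Then $s=(b-b_mc_m)/g$, so $|s|\le(|b|+|b_m|\,|c_m|)/g\le(K+Kg/2)/g=K/g+K/2$. The delicate point is that this last bound is at most $K$: if $g\ge 2$ then $K/g\le K/2$, while if $g=1$ then the interval $(-1/2,1/2]$ forces $c_m=0$, so directly $|s|=|b|\le K$. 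Thus the size control survives exactly at the borderline value $g=1$, and the induction closes; I expect this case distinction to be the only genuinely nontrivial step, the rest being routine manipulation of greatest common divisors.
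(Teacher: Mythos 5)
Your proof is correct, and its skeleton is the same as the paper's: induct on $m$, set $g=\gcd(b_1,\ldots,b_{m-1})$, solve a two-variable equation $gs+b_mc_m=b$ with $|s|,|c_m|\le K$, and feed $s$ back into the induction hypothesis for $b_1/g,\ldots,b_{m-1}/g$. The difference is in how the two-variable core is established. The paper (after disposing of the cases where a coefficient is $\pm1$, and assuming WLOG $b_1>b_2>0$) uses a pigeonhole: among the $2K+1$ numbers $b-kb_2$ with $|k|\le K$, at least one is a multiple of $b_1$, and all of them are bounded by $Kb_1$ since $b_2\le b_1-1$, which caps the cofactor at $K$. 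You instead take a B\'ezout solution and shift along the solution lattice so that $c_m$ is the balanced residue of $c_0$ modulo $g$ in $(-g/2,g/2]$, then bound $|s|\le K/g+K/2$, with the borderline case $g=1$ handled by noting $c_m=0$ there. Both arguments are elementary and exploit the same slack (the coefficients being strictly less than each other, or the factor $K/2$ absorbing the rest); yours gives the slightly stronger conclusion $|c_m|\le g/2$ and folds the degenerate cases $g\in\{0,1\}$ into the same computation rather than treating "$b_i=\pm1$" separately, at the cost of invoking B\'ezout rather than a bare counting argument. No gaps.
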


 \begin{proof}
  If $m=1$, then $b_1=\pm 1$ and thee statement is trivial. For $m=2$, if either of $b_1,b_2$ is equal to $\pm 1$, the statement is trivial as well. If it is not the case, then without loss of generality we can assume $b_1>b_2>0$. But then the numbers $b-kb_2$ for $|k|\le K$ are at most $Kb_1$ in absolute value and at least one of them is a multiple of $b_1$, so the claim follows. Suppose now that $m>2$ and we have already proven the claim for all smaller values of $m$. Let $g=\gcd(b_1,\ldots,b_{m-1})$. Then $b_m$ and $g$ are coprime and have absolute value at most $K$, so we can use the claim for $m=2$ to get $gb'+b_mc_m=b$ with $|b'|,|c_m|\le K$. Also the numbers $\frac{b_1}{g},\ldots,\frac{b_{m-1}}{g}$ are coprime integers bounded by $K$ in absolute value, which gives us $\frac{b_1}{g}c_1+\ldots+\frac{b_{m-1}}{g}c_{m-1}=b'$ with $|c_1|,\ldots,|c_{m-1}|\le K$. Combining those two identities we get the claim.
 \end{proof}
 
 Lemma \ref{matrix} gives us the following corollary.
 
 \begin{cor}\label{reduction}
  Let $d>1$ be an integer, let $\phi:\mathbb{Z}/p\mathbb{Z}\to\mathbb{T}^d$ be a homomorphism and let $F:\mathbb{T}^d\to\mathbb{C}$ be an $M$-Lipschitz function. Then at least one of the following holds:
  \begin{itemize}
   \item $\phi$ is $K$-independent,
   \item there exits a homomorphism $\phi':\mathbb{Z}/p\mathbb{Z}\to\mathbb{T}^{d-1}$ and a $dKM$-Lipschitz function $F':\mathbb{T}^{d-1}\to\mathbb{C}$ with $F'\circ\phi'=F\circ\phi$.
  \end{itemize}
 \end{cor}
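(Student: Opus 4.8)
The plan is to trade the linear dependence among $\phi_1,\dots,\phi_d$ for a drop in dimension, by making a unimodular change of the torus coordinates adapted to that dependence. Suppose $\phi$ is not $K$-independent: there are integers $k_1,\dots,k_d$, not all zero and each of absolute value $<K$, with $k_1\phi_1+\dots+k_d\phi_d=0$. Dividing this relation by $\gcd(k_1,\dots,k_d)$ — which is at most $\max_j|k_j|<K$, hence smaller than $p$ for $p$ large — keeps it valid, so we may assume $v:=(k_1,\dots,k_d)\in\mathbb{Z}^d$ is a primitive vector.

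The heart of the argument is to build a matrix $U\in\mathcal{M}_d(\mathbb{Z})$, all of whose entries are bounded by $K$ in absolute value, which is unimodular, whose first column $u^{(1)}$ satisfies $v\cdot u^{(1)}=1$, and whose remaining columns lie in $\Lambda:=\{x\in\mathbb{Z}^d:v\cdot x=0\}$. For $u^{(1)}$ I would apply the preceding lemma to the nonzero $k_j$ (they are coprime and each $<K$) with right-hand side $1$, obtaining $u^{(1)}$ with $\|u^{(1)}\|_\infty\le K$. For the other $d-1$ columns I would take the transposes of the rows $\rho_2,\dots,\rho_d$ of the matrix $A$ produced by Lemma \ref{matrix} applied to $v$: these are orthogonal to $v$ and have entries bounded by $K$, and — this is where Lemma \ref{matrix} is genuinely used — the exact value $\det A=\sum_j k_j^2$ forces $\rho_2,\dots,\rho_d$ to be a \emph{full} $\mathbb{Z}$-basis of $\Lambda$, not merely a basis of some finite-index sublattice. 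Indeed, since each $\rho_i\perp v$ one has $AA^{T}=\mathrm{diag}\bigl(|v|^2,\,RR^{T}\bigr)$ with $R$ the matrix of rows $\rho_2,\dots,\rho_d$, so $\det(RR^{T})=(\det A)^2/|v|^2=|v|^2$; thus the lattice spanned by $\rho_2,\dots,\rho_d$ has covolume $|v|$, which equals $\mathrm{covol}(\Lambda)$ because $v$ is primitive. Finally $\mathbb{Z}^d=\mathbb{Z}u^{(1)}\oplus\Lambda$, since any $x$ splits as $x=(v\cdot x)\,u^{(1)}+\bigl(x-(v\cdot x)u^{(1)}\bigr)$ with the bracketed term in $\Lambda$; hence the columns of $U$ form a $\mathbb{Z}$-basis of $\mathbb{Z}^d$, so $U$ is unimodular and $U^{-1}$ again has integer entries.

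Now set $\psi:=U^{-1}\phi$, that is $\psi_i(x)=\sum_j(U^{-1})_{ij}\phi_j(x)$, again a homomorphism $\mathbb{Z}/p\mathbb{Z}\to\mathbb{T}^d$. Since $v\cdot u^{(1)}=1$ and $v\cdot u^{(i)}=0$ for $i\ge 2$, expanding $\phi=U\psi$ gives $\psi_1=v\cdot\phi=0$, so $\psi$ takes values in $\{0\}\times\mathbb{T}^{d-1}$; let $\phi':=(\psi_2,\dots,\psi_d):\mathbb{Z}/p\mathbb{Z}\to\mathbb{T}^{d-1}$. Writing $\bar U$ for the $d\times(d-1)$ matrix of the last $d-1$ columns of $U$, we have $\phi=U\psi=\bar U\phi'$, so I would simply put $F':=F\circ\bar U:\mathbb{T}^{d-1}\to\mathbb{C}$, and then $F'\circ\phi'=F\circ\bar U\circ\phi'=F\circ\phi$. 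Each row of $\bar U$ has $d-1$ entries, each of absolute value $\le K$, so $\bar U$ is $(d-1)K$-Lipschitz as a map of tori with the maximum norm (using $\|nt\|\le|n|\,\|t\|$ on $\mathbb{T}$); hence $F'$ is $(d-1)KM$-Lipschitz, which is better than the required $dKM$.

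The only real difficulty is keeping the Lipschitz constant linear in $K$ and $d$. The obvious shortcut — push $F$ straight through the matrix $A$ of Lemma \ref{matrix} — does not work, because $A$ is not unimodular ($\det A=|v|^2$), so $A^{-1}$ can have entries exponentially large in $d$; and one cannot instead define $F'$ only on $\phi'(\mathbb{Z}/p\mathbb{Z})$ and then extend, since on the cyclic group $\phi(\mathbb{Z}/p\mathbb{Z})$ the map $A$ may contract torus distances by an enormous factor. Both ingredients in the construction of $U$ exist precisely to dodge this: a \emph{bounded} Bézout vector for the first column, and a \emph{full} basis of $v^{\perp}\cap\mathbb{Z}^d$ with bounded entries for the others — the latter being exactly what the precise determinant formula in Lemma \ref{matrix} delivers.
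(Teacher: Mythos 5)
Your proof is correct and follows essentially the same route as the paper: both rest on Lemma \ref{matrix} together with the observation that the exact determinant value forces the $d-1$ orthogonal rows to span all of $v^{\perp}\cap\mathbb{Z}^d$ (you via the Gram/covolume computation, the paper via the index argument with $x\mapsto\langle a,x\rangle\bmod\det A$), and both take $F'$ to be $F$ composed with the integer matrix formed by those rows, so the Lipschitz bound comes from the entry bound $K$. The only difference is cosmetic: you realize the change of variables as a unimodular automorphism of $\mathbb{T}^d$ (padding with a B\'ezout first column, whose boundedness is in fact not even needed) and set $\psi=U^{-1}\phi$, whereas the paper parametrizes the subtorus $\{t:\langle a,t\rangle=0\}$ by $\mathbb{T}^{d-1}$ and defines $\phi'$ implicitly --- two packagings of the same well-definedness argument.
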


 \begin{proof}
  Suppose $\phi$ is not $K$-independent, i.e. there exist intgers $a_1,\ldots,a_d$ with $\sum_{j=1}^da_j\phi_j=0$ and $|a_j|<K$. It is not hard to see that the second part is true for $p\le K$ as long as $d-1\ge 1$; suppose then $p>K$. In that case we are allowed to divide all of $a_j$ by their greatest common divisor and without loss of generality assume $\gcd(a_1,\ldots,a_d)=1$. By Lemma \ref{matrix} we can find $d-1$ integer vectors orthogonal to $a=(a_1,\ldots,a_d)$ suth that the matrix $A$ consisting of all of them has determinant $\sum_{j=1}^d a_j^2$. We claim that the $\mathbb{Z}$-span of these $d-1$ vectors coincides with the intersection of their $\mathbb{R}$-span and $\mathbb{Z}^d$. Indeed, we know that the $\mathbb{Z}$-span of all $d$ vectors is a subgroup of $\mathbb{Z}^d$ of index $\det A=\sum_{j=1}^da_j^2$. On the other hand, the map $x\mapsto\langle a,x\rangle\pmod{\det A}$ gives rise to a surjective homomorphism from the quotient group to a group of size $\det A$. Therefore this homomorphism 
is in fact an isomorphism and its kernel is precisely the $\mathbb{Z}$-span of all $d$ vectors. Intersecting it with the $\mathbb{R}$-span of the last $d-1$ vecrors obviously gives us their $\mathbb{Z}$-span. But this intersection can be easily seen as $\{x\in\mathbb{Z}^d:\langle a,x\rangle=0\}$ or in other words the intersection of the $\mathbb{R}$-span and all of $\mathbb{Z}^d$.
  
  Since $\gcd(a_1,\ldots,a_d)=1$, it follows that the set $\{t\in\mathbb{T}^d:\langle a,t\rangle=0\in\mathbb{T}\}$ is in fact the image of the subspace $\{x\in\mathbb{R}^d:\langle a,x\rangle=0\in\mathbb{R}\}$ under the projection $\pmod{\mathbb{Z}^d}$. Therefore it can be parametrized as $A'(\mathbb{T}^{d-1})$, where $A'$ is $(d-1)\times d$ matrix obtained from $A$ by deleting its first row $(a_1,\ldots, a_n)$. Let $F':\mathbb{T}^{d-1}\to\mathbb{C}$ and $\phi':\mathbb{Z}/p\mathbb{Z}\to\mathbb{T}^{d-1}$ be functions satisfying $F'=F\circ A'$ and $\phi=A'\circ\phi'$. Note that $\phi'$ is well defined and is a homomorphism. Then
  \begin{equation*}
   F'\circ\phi'=F\circ A'\circ\phi'=F\circ\phi.
  \end{equation*}
  The only thing left is to estimate the Lipschitz constant of $F'$. Since $A'$ has entries bounded by $K$, it can be viewed as a $dK$-Lipshitz function. Composing it with an $M$-Lipshitz function $F$ gives us a function of Lipschitz constant at most $dKM$.
 \end{proof}

 Now we are ready to give a proof of the full version of the regularity lemma (in the $U^2$ case).
 
 \begin{thm}[arithmetic regularity lemma: final version]\label{arl}
  Let $\epsilon>0$ and let $\mathcal{F}$ be a growth function. Then there exixts a real number $M_0=M_0(\epsilon,\mathcal{F})>0$ for which the following statement is true. Let $p>p_0(\epsilon,\mathcal{F})$ be a prime and $f:\mathbb{Z}/p\mathbb{Z}\to [0,1]$ be a function. Then there exists a number $0<M\le M_0$ and a decomposition
  \begin{equation*}
   f=f_\mathrm{str}+f_\mathrm{sml}+f_\mathrm{unf}
  \end{equation*}
  satisfying the following properties:
  \begin{itemize}
   \item $f_\mathrm{str}=F\circ\phi$, where $\phi:\mathbb{Z}/p\mathbb{Z}\to\mathbb{T}^d$ is a $\mathcal{F}(M)$-independent homomorphism with $d\le M$ and $F:\mathbb{T}^d\to [0,1]$ is an $M$-Lipschitz function ($f_\mathrm{str}$ is structured),
   \item $\|f_\mathrm{sml}\|_2\le\epsilon$ ($f_\mathrm{sml}$ is small),
   \item $|\langle f_\mathrm{unf},\chi\rangle|\le\frac{1}{\mathcal{F}(M)}$ for every character $\chi$ ($f_\mathrm{unf}$ is $U^2$-uniform).
   \item $f_\mathrm{str}$ and $f_\mathrm{str}+f_\mathrm{sml}$ both take values in $[0,1]$.
   \end{itemize}
 \end{thm}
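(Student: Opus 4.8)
The plan is to start from the intermediate version of the regularity lemma and then repeatedly apply Corollary \ref{reduction} to shrink the dimension of the homomorphism underlying $f_{\mathrm{str}}$ until it becomes independent, the only real issue being to arrange that this process terminates with all parameters under control.

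First I would apply the intermediate version to $f$, with the same $\epsilon$ but with a growth function $\mathcal{F}''$ (larger than $\mathcal{F}$, to be pinned down at the end). This produces $0<M_1\le M_0(\epsilon,\mathcal{F}'')$ and a decomposition $f=f_{\mathrm{str}}+f_{\mathrm{sml}}+f_{\mathrm{unf}}$ with $f_{\mathrm{str}}=F\circ\phi$, $\phi:\mathbb{Z}/p\mathbb{Z}\to\mathbb{T}^d$, $d\le M_1$, $F$ being $M_1$-Lipschitz and $[0,1]$-valued, $\|f_{\mathrm{sml}}\|_2\le\epsilon$, $|\langle f_{\mathrm{unf}},\chi\rangle|\le 1/\mathcal{F}''(M_1)$ for all $\chi$, and $f_{\mathrm{str}}$, $f_{\mathrm{str}}+f_{\mathrm{sml}}$ both $[0,1]$-valued. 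From now on $f_{\mathrm{sml}}$ and $f_{\mathrm{unf}}$ stay fixed; every subsequent step only rewrites $f_{\mathrm{str}}$ in a new form without changing it as a function, so the bound on $\|f_{\mathrm{sml}}\|_2$, the uniformity of $f_{\mathrm{unf}}$, and the two range conditions are preserved for free.

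Next I would iterate the following step. At the current stage $f_{\mathrm{str}}=F\circ\phi$ with $\phi:\mathbb{Z}/p\mathbb{Z}\to\mathbb{T}^d$ and $F$ being $L$-Lipschitz and $[0,1]$-valued; put $M:=\max(d,L)$ and $K:=\mathcal{F}(M)$. If $\phi$ is $K$-independent, stop and output this data together with $M$. If $\phi$ is not $K$-independent and $d>1$, apply Corollary \ref{reduction} with Lipschitz parameter $L$ and independence parameter $K$: it replaces $\phi$ by some $\phi':\mathbb{Z}/p\mathbb{Z}\to\mathbb{T}^{d-1}$ and $F$ by $F'$, which is $dKL$-Lipschitz, has $F'\circ\phi'=F\circ\phi$, and is again $[0,1]$-valued because its range is contained in that of $F$; then repeat. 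If $d=1$ and $\phi$ is not $K$-independent, then since $p>K$ (ensured below) the only possibility is that $\phi$ is trivial, so $f_{\mathrm{str}}$ is constant and can be rewritten with $d=0$, which is vacuously $K$-independent, and we stop. Since $d$ strictly decreases at each non-terminal step and begins at $d\le M_1$, the loop runs at most $\lceil M_1\rceil$ times.

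Finally I would track the parameters. A single reduction step sends $M=\max(d,L)$ to at most $\max(d-1,dKL)\le M^2\mathcal{F}(M)$, so after at most $\lceil M_1\rceil$ steps the output value $M^*$ satisfies $M^*\le G(M_1)$, where $G$ is the increasing function obtained by iterating $x\mapsto\max(x,x^2\mathcal{F}(x))$ exactly $\lceil x\rceil$ times (we may assume $\mathcal{F}\ge 1$, which only strengthens the lemma). By construction the output then has $d\le M^*$, $F$ is $M^*$-Lipschitz, and $\phi$ is $\mathcal{F}(M^*)$-independent; moreover $|\langle f_{\mathrm{unf}},\chi\rangle|\le 1/\mathcal{F}''(M_1)$, which is at most $1/\mathcal{F}(M^*)$ precisely when we take $\mathcal{F}'':=\mathcal{F}\circ G$. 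Setting $M_0:=G(M_0(\epsilon,\mathcal{F}''))$ and $p_0:=\max(p_0(\epsilon,\mathcal{F}''),\mathcal{F}(M_0))$ --- the latter guaranteeing $p>K$ at every stage, since every intermediate $M$ is $\le M_0$ --- completes the proof. I expect this last bookkeeping to be the only delicate point: the target parameter $\mathcal{F}(M^*)$ refers to $M^*$, which is only known after we see how many reduction steps occur, but that number is controlled a priori by $M_1$, which is exactly what lets us pre-compose $\mathcal{F}$ with the fixed function $G$ before invoking the intermediate version.
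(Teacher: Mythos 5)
Your proposal is correct and follows essentially the same route as the paper: apply the intermediate version with a growth function obtained by pre-composing $\mathcal{F}$ with the iterate of $M\mapsto M^2\mathcal{F}(M)$ taken about $M$ times, then repeatedly invoke Corollary \ref{reduction} to lower the dimension until $\mathcal{F}(M)$-independence holds. Your write-up is in fact slightly more careful than the paper's (handling the $d=1$ case, the preservation of the range conditions, and the requirement $p>K$ explicitly), but the underlying argument is identical.
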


 \begin{proof}
  Let us start with applying the previous version of the regularity lemma with the same parameter $\epsilon$ and a different growth function $\mathcal{F}'$ to be specified. We stick to the obtained decomposition $f=f_\mathrm{str}+f_\mathrm{sml}+f_\mathrm{unf}$ but we would like to exploit more properties of $F$ and $\phi$. If $\phi$ is $\mathcal{F}$-independent, we are done. Otherwise we use Lemma \ref{reduction} to decrease the dimension $d$ by $1$ at the cost of potentially increasing the Lipschitz constant up to $dM\mathcal{F}(M)\le M^2\mathcal{F}(M)$. Put $\mathcal{F}_1(M)=M^2\mathcal{F}(M)$; since this procedure can be applied at most $d\le M$ times, so the correct choice of $\mathcal{F}'$ is $\mathcal{F}'(M)=\mathcal{F}(\underbrace{\mathcal{F}_1(\ldots(\mathcal{F}_1(M))\ldots)}_{\lfloor M\rfloor\text{ times}})$.
 \end{proof}

 To make a proper use of the above result, we often need to relate the behaviours of $f_\mathrm{str}$ and $F$. Below we prove a statement of this kind.
 
 \begin{lm}
  Let $d$ be a positive integer, $p$ be a prime, let $\phi_1,\ldots, \phi_d:\mathbb{Z}/p\mathbb{Z}\to\mathbb{T}$ be pre-characters and let $\phi=(\phi_1,\ldots,\phi_d):\mathbb{Z}/p\mathbb{Z}\to\mathbb{T}^d$ be their product. Suppose that the set $\{\phi_j\}_{j=1}^d$ is $K$-independent. Let $F:\mathbb{T}^d\to\mathbb{C}$ be an $M$-Lipschitz function. Then
  \begin{equation*}
   \left|\mathbb{E}_x F(\phi(x))-\int_{\mathbb{T}^d}F(t)dt\right|\le\frac{M}{\sqrt{K}}.
  \end{equation*}
 \end{lm}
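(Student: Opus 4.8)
\emph{Proof plan.} The plan is to replace $F$ by a trigonometric polynomial $G$ all of whose frequencies $\mathbf{k}=(k_1,\dots,k_d)$ satisfy $\|\mathbf{k}\|:=\max_j|k_j|<K$ and which is close to $F$ in the supremum norm; then $K$-independence will force $\mathbb{E}_x G(\phi(x))$ to equal $\int_{\mathbb{T}^d}F$ exactly, so that the whole error comes from $\|F-G\|_\infty$.

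Concretely I would take $G=F*\Phi_K$, the convolution of $F$ with the $d$-dimensional Fejér kernel of order $K$. From the description of $\widehat{\Phi_K}$ recorded above, $\widehat{\Phi_K}$ is supported on $\{\|\mathbf{k}\|\le K-1\}$ with $\widehat{\Phi_K}(0)=1$, so $G$ is a trigonometric polynomial with $\widehat{G}=\widehat{F}\cdot\widehat{\Phi_K}$ supported on the same box and $\widehat{G}(0)=\int_{\mathbb{T}^d}F$. Expanding $G\circ\phi$ into characters of $\mathbb{Z}/p\mathbb{Z}$, the frequency-$0$ term contributes $\int_{\mathbb{T}^d}F$, while every other term $e^{2\pi i\langle\mathbf{k},\phi(x)\rangle}$ with $0\neq\mathbf{k}$, $\|\mathbf{k}\|<K$, is a non-trivial character of $\mathbb{Z}/p\mathbb{Z}$ by $K$-independence (the pre-character $\sum_j k_j\phi_j$ is non-zero), hence averages to $0$. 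Thus $\mathbb{E}_x G(\phi(x))=\int_{\mathbb{T}^d}F$ exactly, and
\[
\Big|\mathbb{E}_x F(\phi(x))-\int_{\mathbb{T}^d}F\Big|\le\|F-F*\Phi_K\|_\infty\le M\int_{\mathbb{T}^d}\Phi_K(s)\,\|s\|\,ds,
\]
the last inequality using $\int\Phi_K=1$ and the $M$-Lipschitz hypothesis.

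Everything thus reduces to the kernel moment estimate $\int_{\mathbb{T}^d}\Phi_K(s)\|s\|\,ds\le K^{-1/2}$, which I expect to be the main point. Plugging the tail bound $\int_{\|s\|>\lambda}\Phi_K\le\frac{d}{4K\lambda^2}$ from the Fejér concentration lemma into a split of the integral at radius $\lambda$ only gives order $(d/K)^{1/3}$, which is too weak, so one needs the sharp \emph{second} moment instead: from $2\|u\|\le|\sin(\pi u)|$ on $[-\tfrac12,\tfrac12]$ one gets $\int_{\mathbb{T}}\Phi_K^{(1)}(u)\|u\|^2\,du=O(1/K)$ for the one-dimensional kernel $\Phi_K^{(1)}$, and a single application of Cauchy--Schwarz across the $d$ coordinates, using that $\Phi_K$ is a product of one-dimensional kernels, upgrades this to $\int_{\mathbb{T}^d}\Phi_K(s)\|s\|\,ds=O((d/K)^{1/2})$. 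To land on the stated bound with no stray $d$-factor one can instead run the identical argument with the Jackson-type kernel coming from a Fejér kernel of order $\lceil K/2\rceil$ (still supported in the box, but with second moment $O(K^{-2})$), leaving the first two steps untouched. Combining the kernel estimate with the displayed reduction finishes the proof.
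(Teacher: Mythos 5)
Your reduction is the paper's reduction verbatim: convolve $F$ with a nonnegative kernel whose spectrum lies in the box $(-K,K)^d$, note that $K$-independence forces every non-constant term $e^{2\pi i\langle\mathbf{k},\phi(x)\rangle}$ to average to zero in $x$, so that $\mathbb{E}_x(F*\Phi_K)(\phi(x))=\int_{\mathbb{T}^d}F$, and bound the remaining error by $M\int_{\mathbb{T}^d}\Phi_K(s)\|s\|\,ds$. The only place you diverge is the kernel moment estimate, and that is exactly where your argument does not close. Your Cauchy--Schwarz computation is correct and gives $\int\Phi_K(s)\|s\|\,ds\le\tfrac12\sqrt{d/K}$; but the Jackson-kernel substitution does not do what you claim. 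It improves the one-dimensional second moment from $\asymp K^{-1}$ to $\asymp K^{-2}$, while the factor $d$ enters through $\|s\|^2\le\sum_j\|s_j\|^2$ and is left completely untouched: you land on $O(M\sqrt{d}/K)$, which implies the stated $M/\sqrt{K}$ only when $d=O(K)$. Since the lemma imposes no relation between $d$ and $K$, this is a genuine gap, not a matter of constants.

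Moreover the gap cannot be repaired by a sharper moment computation. For fixed $K$ the quantity $\int_{\mathbb{T}^d}\Phi_K(s)\|s\|\,ds$ equals $\mathbb{E}\max_{j\le d}\|X_j\|$ for i.i.d.\ $X_j$ with density $\Phi_K^{(1)}$, and since that density has positive mass on every neighbourhood of $\|u\|=\tfrac12$, this expectation tends to $\tfrac12$ as $d\to\infty$ (likewise for the Jackson kernel). So the first absolute moment really does exceed $K^{-1/2}$ once $d$ is large compared with $K$, and the whole reduction --- yours and the paper's --- only functions in the regime $d\lesssim K$. You are in good company: the paper's own proof of this step is also flawed, since it asserts $\sup_{\|s\|\ge u}\Phi_K(s)\le(4Ku^2)^{-d}$, whereas $\|s\|\ge u$ in the maximum norm only forces \emph{one} coordinate to be large; for instance $\Phi_K(\tfrac12,0,\ldots,0)=K^{d-2}$ for odd $K$, against the claimed value $K^{-d}$ at $u=\tfrac12$. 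You have therefore correctly located the delicate point. If you are content with the regime $d\le K$ (which covers the intended applications, where $K=\mathcal{F}(M)$ dwarfs $d\le M$), your Jackson-kernel argument goes through once you carry the $\sqrt{d}$ honestly; to obtain the statement as printed for all $d$ one needs either a different idea or a $d$-dependent right-hand side.
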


 \begin{proof}
  Let $\chi_j:\mathbb{T}^d\to\mathbb{C}$ defined via $\chi(t_1\ldots,t_d)=e^{2\pi i t_j}$ be the basic characters and let $\Phi_K:\mathbb{T}\to\mathbb{C}$ be the Fejer kernel defined via the formula
  \begin{equation*}
   \Phi_K(t)=\frac{1}{K^d}\prod_{j=1}^d\left|\sum_{k=0}^{K-1}\chi_j^k(t)\right|^2.
  \end{equation*}
  Recall that $\int_{\mathbb{T}^d}\Phi_K(t)dt=1$. We also have the bound
  \begin{equation*}
   \left|\sum_{k=0}^{K-1}\chi_j^k(t)\right|=\left|\frac{1-\chi_j^K(t)}{1-\chi_j(t)}\right|\le\frac{2}{|1-e^{2\pi it_j}|}\le\frac{1}{2\|t_j\|}.
  \end{equation*}
  Therefore $\Phi_K(t)\le\prod_{j=1}^d\frac{1}{4K\|t_j\|^2}$. Having this inequality we would like to show that $F(t)$ and $F*\Phi_K(t)$ are close together for any $t\in\mathbb{T}^d$. Let us estimate their difference:
  \begin{equation*}
   |F(t)-F*\Phi_K(t)|=\left|\int_{\mathbb{T}^d}(F(t)-F(t-s))\Phi_K(s)ds\right|\le M\int_{\mathbb{T}^d}\|s\|\Phi_K(s)ds.
  \end{equation*}
  This is already independent of $t$; now we make use of the fact that $\Phi_K(s)$ is large precisely when $\|s\|$ is small; more accurately $\|s\|\ge u$ implies $\Phi_K(s)\le(\frac{1}{4Ku^2})^d$ for any $u\ge 0$. Combining this with $\int_{\mathbb{T}^d}\Phi_K(t)dt=1$ leads to the inequality
  \begin{multline*}
   \int_{\mathbb{T}^d}\|s\|\Phi_K(s)ds=\int_{\mathbb{T}^d}\int_0^{1/2} 1_{\{u\le\|s\|\}}\Phi_K(s)duds=\\
   =\int_0^{1/2}\int_{\mathbb{T}^d} 1_{\{u\le\|s\|\}}\Phi_K(s)dsdu\le\int_0^{1/2}\min\left(1,\frac{1}{4Ku^2}\right)^ddu\le\frac{1}{\sqrt{K}},
  \end{multline*}
  and consequently $|F(t)-F*\Phi_K(t)|\le\frac{M}{\sqrt{K}}$ for any $t\in\mathbb{T}^d$. In particular, the triangle inequality yields $|\mathbb{E}_xF(\phi(x))-\mathbb{E}_xF*\Phi_K(\phi(x))|\le\frac{M}{\sqrt{K}}$. Let us now expand the last expression:
  \begin{multline*}
   \mathbb{E}_xF*\Phi_K(\phi(x))=\mathbb{E}_x\left(\int_{\mathbb{T}^d}F(t)\Phi_K(\phi(x)-t)dt\right)=\\
   =\int_{\mathbb{T}^d}F(t)\left(\mathbb{E}_x\Phi_K(\phi(x)-t)\right).dt
  \end{multline*}
  Expanding out the formula for $\Phi_K$, we can see that the average $\mathbb{E}_x\Phi_K(\phi(x)-t)$ is a sum of the averages of the form $c\cdot\mathbb{E}_x\chi(\phi(x)-t)$, where $\chi(t)=\prod_{j=1}^d\chi_j^{\alpha_j}$ for some $\alpha_1,\ldots\alpha_j\in\{1-K,\ldots, -1,0,1,\ldots,K-1\}$. But from the $K$-independence of $\phi$ we can see that $\chi(\phi(x)-t)$ is never a constant function in $x$ and therefore has average $0$, unless $\chi$ is a trivial chatracter with $\alpha_1=\ldots=\alpha_j=0$, in which case $c=1$. In the end, $\mathbb{E}_x\Phi_K(\phi(x)-t)=1$ for all $t\in\mathbb{T}^d$, which leads to $\mathbb{E}_xF*\Phi_K(\phi(x))=\int_{\mathbb{T}^d}F(t)dt$. Plugging this formula into the previous inequality, we get the desired result.
 \end{proof}

 One may wonder how we managed to prove the above lemma without assuming that $p$ is large. In fact, the $K$-independence of a set of $d$ homomorphisms carries a hidden assumption $p\ge K^d$.
 
 In this paper we did not need the above result, or even the full version of the regularity lemma (the intermediate version would be enough), but in general it might be useful to have them around.

\end{document}